\newlength{\rulebreite}
\def\timesover#1#2#3{\ \xymatrix@1@=0pt@M=0pt{ _{#1}&\times&_{#2} \\& ^{#3}&}\ }
\def\otimesover#1#2#3{\ \xymatrix@1@=0pt@M=0pt{ _{#1}&\otimes&_{#2} \\& ^{#3}&}\ }
\theoremstyle{definition}
\newtheorem{thm}{Theorem}
\newtheorem{lem}[thm]{Lemma}
\newtheorem{cor}[thm]{Corollary}
\newtheorem{prop}[thm]{Proposition}
\theoremstyle{definition}
\newtheorem{defn}[thm]{Definition}
\newtheorem{rmk}[thm]{Remark}
\numberwithin{thm}{section}
\numberwithin{equation}{section}
\newcommand{\Spec}{{\rm Spec \,}}
\newcommand{\C}{{\mathbb C}}
\renewcommand{\P}{{\mathbb P}}
\newcommand{\Q}{{\mathbb Q}}
\newcommand{\R}{{\mathbb R}}
\newcommand{\Z}{{\mathbb Z}}
\newcommand{\g}{\mathfrak{g}}
\newcommand{\gl}{\mathfrak{gl}}
\newcommand{\la}{\mathfrak{a}}
\def\tilde{\widetilde}
\begin{document}

\title{Algebraic entropy for smooth projective varieties}
\author{K. V. Shuddhodan}
\email{kvshud@purdue.edu}
\address{Department of Mathematics,
Purdue University, West Lafayette, IN 47907, USA}

\begin{abstract}

We show that the spectral radius for the action of a self map $f$ of a smooth projective variety (over an arbitrary base field) on its $\ell$-adic cohomology is achieved on the $f^*$ stable sub-algebra generated by any ample class.~This generalizes a result of Esnault-Srinivas who had obtained an analogous result for automorphisms of surfaces.~Over $\C$ we also show that this sub-algebra is naturally an irreducible representation of a Looijenga-Lunts-Verbitsky type Lie algebra acting on the cohomology of a smooth projective variety.
\end{abstract}

%\classification{14G17,~14G99}
%\keywords{ topological entropy, $\ell$-adic cohomology, trace formula}

\maketitle

\section{Introduction}

Let $X$ be a compact K\"ahler manifold and $\omega \in H^{2}(X,\R)$ be a $(1,1)$ form corresponding to the choice of a K\"ahler metric.~Let $f \colon X \to X$ be a surjective and holomorphic self map of $X$.~To any such pair $(X,f)$ consisting of a compact metric space and a continuous self map one can associate a real number $d_{\text{top}}(f)$,~the topological entropy of the pair $(X,f)$ \cite{Bowen,Dinaburg}.

Let $\lambda(f),~\lambda_{\text{even}}(f)$ and $\lambda_{p}(f),~0 \leq p \leq \text{dim}(X)$ be the spectral radius of $f^*$ acting on $\oplus_{i} H^{i}(X,\Q)$,~$\oplus_{i} H^{2i}(X,\Q)$ and $H^{p,p}(X,\R)$ respectively.~The following fundamental theorem is due to Gromov-Yomdin.

\begin{thm}\cite{Gromov,Yomdin}\label{Gromov_Yomdin} With notations as above,~$d_{\text{top}}(f)=\log \lambda(f)=\log \lambda_{\text{even}}(f)=\max \limits_{0 \leq p \leq \text{dim}(X)} \log \lambda_p(f)$.

\end{thm}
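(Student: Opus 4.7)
The plan is to split the theorem into a chain of inequalities. The trivial direction $\max_p \log \lambda_p(f) \le \log \lambda_{\mathrm{even}}(f) \le \log \lambda(f)$ is immediate from the $f^*$-stable inclusions $H^{p,p}(M,\R) \hookrightarrow H^{2p}(M,\C) \hookrightarrow \bigoplus_i H^i(M,\C)$, so the real content lies in Gromov's upper bound $d_{\mathrm{top}}(f) \le \log \max_p \lambda_p(f)$ and Yomdin's lower bound $d_{\mathrm{top}}(f) \ge \log \lambda(f)$. Together these force equality throughout and in particular yield $\lambda(f) = \lambda_{\mathrm{even}}(f) = \max_p \lambda_p(f)$.

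For the upper bound I would pass to the iterated graph
\[ \Gamma_n = \{(x, f(x), \dots, f^{n-1}(x)) : x \in M\} \subset M^n, \]
equipped with the product K\"ahler form $\Omega_n = \sum_{i=0}^{n-1} \pi_i^* \omega$. A standard comparison of $(n,\varepsilon)$-separated sets with $\varepsilon$-nets on $\Gamma_n$ (in the induced Riemannian metric) gives $d_{\mathrm{top}}(f) \le \limsup_{n \to \infty} \tfrac{1}{n} \log \mathrm{vol}(\Gamma_n)$. Expanding this volume via the multinomial theorem, and identifying $\Gamma_n$ with $M$ via the first projection, reduces the estimate to sums of mixed intersection numbers $\int_M (f^{a_1})^* \omega \wedge \dots \wedge (f^{a_d})^* \omega$ with $d = \dim M$. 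Using the push-pull formula together with the Khovanskii--Teissier mixed inequalities, each such number is controlled by $(\max_p \lambda_p(f))^n$ up to a polynomial factor in $n$, which yields the claim.

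For Yomdin's lower bound one must exhibit, for each $k$, roughly $\lambda_k(f)^n$ many $(n,\varepsilon)$-separated orbits. The key input is Yomdin's semi-algebraic reparametrization (``algebraic'') lemma: the image of any $C^r$ map of bounded complexity can be covered by finitely many pieces with uniformly bounded $C^r$-norms, the number of pieces being controlled by a volume. Applied to the graphs of $f^n$, this converts cohomological growth on $H^k(M,\Q)$ into a forced count of separated orbits via a Lefschetz-type pairing with the diagonal in $M \times M$. This step is by far the deepest ingredient and I would cite Yomdin rather than reprove it.

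As an independent check of the purely cohomological equality $\lambda(f) = \lambda_{\mathrm{even}}(f) = \max_p \lambda_p(f)$, one can argue directly via Hodge theory: complex conjugation gives $\rho(f^*|H^{p,q}) = \rho(f^*|H^{q,p})$, while the Hodge--Riemann bilinear relations applied to $(f^n)^*\omega^k \wedge \omega^{d-k}$ yield the log-concavity $\rho(f^*|H^{p,q})^2 \ge \rho(f^*|H^{p-1,q-1})\,\rho(f^*|H^{p+1,q+1})$, which forces the maximum over bi-degrees to be attained on the diagonal $p=q$. The principal obstacle of the whole argument is thus Yomdin's estimate; the cohomological identifications are essentially formal consequences of the Hodge structure, and the Gromov bound is an essentially elementary volume computation.
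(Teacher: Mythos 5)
The paper does not prove Theorem~\ref{Gromov_Yomdin}: it is stated as a classical result with references to Gromov and Yomdin and is used only as motivation for the positive-characteristic analogues that follow. There is therefore no in-paper proof to compare against. That said, a few remarks on your sketch.

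Your main chain of inequalities --- the trivial inclusions $\max_p \lambda_p \le \lambda_{\text{even}} \le \lambda$, then Gromov's holomorphic-graph volume estimate to get $d_{\text{top}}(f) \le \log \max_p \lambda_p(f)$, then Yomdin's $C^\infty$ reparametrization lemma to get $d_{\text{top}}(f) \ge \log \lambda(f)$ --- is exactly the structure of the classical argument, and together these do force equality throughout. The expansion of $\mathrm{vol}(\Gamma_n)$ into mixed intersection numbers controlled by Khovanskii--Teissier-type inequalities is the right mechanism for the upper bound.

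However, your ``independent check'' of the purely cohomological identity $\lambda = \max_p \lambda_p$ has a gap. The log-concavity you state, namely $\rho(f^*|H^{p,q})^2 \ge \rho(f^*|H^{p-1,q-1})\,\rho(f^*|H^{p+1,q+1})$, is a constraint \emph{along} the direction $(1,1)$ in the Hodge diamond. But the lines $q - p = \text{const}$ along this direction never meet the diagonal $p=q$ unless $p=q$ to begin with, so that inequality cannot by itself force the maximum of $\rho(f^*|H^{p,q})$ to be achieved at $p=q$. The standard way to close this gap is the estimate $\rho(f^*|H^{p,q}) \le \bigl(\lambda_p(f)\,\lambda_q(f)\bigr)^{1/2}$ (a Cauchy--Schwarz consequence of the Hodge--Riemann pairing, due to Dinh and Dinh--Sibony), combined with the log-concavity of $p \mapsto \lambda_p(f)$. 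Also note that applying the Hodge--Riemann relations to classes of the form $(f^n)^*\omega^k \wedge \omega^{d-k}$ only sees the $(p,p)$ part of cohomology, so it cannot directly control $H^{p,q}$ with $p \neq q$; this is precisely where the Cauchy--Schwarz step is needed.
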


Theorem \ref{Gromov_Yomdin} implies that for a surjective self map $f$ of a smooth projective variety over $\C$,~the spectral radius of $f^*$ on the Hodge classes equals the spectral radius on the entire cohomology (see \cite{Oguiso_ICM} for a comprehensive summary of the Gromov-Yomdin theory and its generalizations).

When working over an arbitrary base field there is no obvious and useful notion of a topological entropy,~however it still makes sense to look at the action of $f^*$ on suitable cohomology theories.~In this direction Esnault-Srinivas obtained the following result for automorphisms of smooth projective surfaces over an arbitrary base field.

\begin{thm}\cite[Theorem 1.1]{Esnault-Srinivas}\label{Esnault-Srinivas}
Let $f \colon X \to X$ be an automorphism of a smooth projective surface over an arbitrary algebraically closed field $k$.~Let $\ell$ be a prime invertible in $k$.~Let $\omega \in H^2(X,\Q_{\ell})$ be an ample class.~Then for any embedding of $\Q_{\ell}$ inside $\C$,

\begin{enumerate}

\item the spectral radius for the action of $f^*$ on $H^*(X,\Q_{\ell})$ coincides with the spectral radius for its action on the sub-space spanned by $f^{n*}\omega,~n \in \Z$.

\item Let $V(f,\omega)$ be the largest $f^*$-stable sub-space of $H^2(X,\Q_{\ell})$ in the orthogonal complement of $\omega$ (with respect to the cup-product pairing).~Then $f^*$ is of finite order on $V(f,\omega)$.

\end{enumerate}

\end{thm}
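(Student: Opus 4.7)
\emph{Reformulation.} The plan is to reduce both claims to a single statement on $H^2$: that $f^*$ has finite order on $V := W^\perp$, where $W := \mathrm{span}_{\Q_\ell}\{f^{n*}[\omega] : n \in \Z\}$ is the smallest $f^*$-stable sub-space of $H^2$ containing $[\omega]$, and $\perp$ is taken with respect to the cup-product pairing. Since $f$ is an automorphism, $f^*$ is trivial on $H^4$, so the pairing is $f^*$-invariant and $V$ is $f^*$-stable. An elementary check identifies $V = V(f, [\omega])$: any $f^*$-stable sub-space of $[\omega]^\perp$ lies in $W^\perp$, by applying $f^{\pm n*}$ and using invariance of the pairing. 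Part (2) is precisely the statement that $f^*|_V$ has finite order; part (1) follows because, modulo care with the radical of the pairing restricted to $W$, finite order of $f^*|_V$ yields $H^2 = W + V$ with spectral radius $1$ on $V$, so the spectral radius on $H^2$ is attained on $W$.

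\emph{Reduction from $H^*$ to $H^2$.} The action of $f^*$ on $H^0$ and $H^4$ is trivial. On $H^1$, $f$ descends to an automorphism of the Albanese variety $A = \mathrm{Alb}(X)$ and $H^1(X, \Q_\ell) \cong H^1(A, \Q_\ell)$; the eigenvalues are algebraic integers, but need not have complex absolute value $1$ in general (take $X = E \times E$ with $\mathrm{End}(E) = \Z$ and $f$ a hyperbolic element of $GL_2(\Z)$). To still bound the spectral radius on $H^1$ by the one on $H^2$, I would use the cup product $H^1 \otimes H^1 \to H^2$ together with hard Lefschetz on $X$: any large eigenvalue on $H^1$ produces a comparably large eigenvalue on $H^2$ via pairing with iterates of the ample class. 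Poincar\'e duality transports the same bound to $H^3$, so the spectral radius on $H^*(X, \Q_\ell)$ is attained on $H^2$.

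\emph{Finite order on $V$, and the main obstacle.} The crux is showing that $f^*|_V$ has finite order. I would aim to prove that, after a choice of embedding $\Q_\ell \hookrightarrow \C$, $f^*|_V$ preserves definite forms on complementary $f^*$-stable sub-spaces of $V$---an $\ell$-adic incarnation of the Hodge index theorem combined with positivity of the $(2,0)$-piece. Over $\C$, this is standard: classical Hodge index on $H^{1,1}_\R \cap [\omega]^\perp$ gives a negative definite form, while the Hermitian form $\int \omega \wedge \bar{\omega}$ is positive definite on $H^{2,0} \oplus H^{0,2}$, and $V$ meets each piece $f^*$-stably. Over an arbitrary base field, with no Hodge decomposition available, I would try (i) a spreading-out argument, realizing $(X, f, [\omega])$ as the base change of a scheme of finite type over $\Z$, specializing to a characteristic-zero geometric point where Hodge theory applies, and transporting the conclusion via smooth proper base change for $\ell$-adic cohomology; or (ii) Hodge index for $\mathrm{NS}(X) \otimes \Q_\ell$ (valid in every characteristic), treating its orthogonal complement in $H^2$ by a separate direct argument. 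Once definiteness is established, the fact that $f^*$ preserves the integral lattice $H^2(X, \Z_\ell)/\mathrm{tors}$ places $f^*|_V$ inside a finite subgroup of a real orthogonal group, yielding finite order and completing both (1) and (2). The principal difficulty I would expect to confront lies precisely in this $\ell$-adic Hodge index step.
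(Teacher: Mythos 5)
Your proposal correctly identifies the crux of the theorem---showing $f^*$ has finite order on the orthogonal complement of $W$---but the route you sketch for overcoming it breaks down, and it is not how Esnault--Srinivas proceed. The spreading-out/specialization argument (your option (i)) does not work in general: a smooth projective surface $(X, f)$ over $\bar{\F}_p$ need not lift to characteristic zero (there are nonliftable surfaces), and even when $X$ lifts, the automorphism $f$ need not extend to the lift; moreover the specialization map on N\'eron--Severi groups can strictly increase rank (e.g. supersingular $K3$'s), so positivity statements proved on a characteristic-zero fiber do not simply descend. Your option (ii), the Hodge index theorem for $\mathrm{NS}(X)_\Q$, is indeed available in every characteristic, but it only controls the algebraic part of $H^2$; your ``separate direct argument'' for the transcendental part $T_\ell(X) := \mathrm{NS}(X)_{\Q_\ell}^\perp \subset H^2(X,\Q_\ell)$ is precisely what is missing, and there is no known $\ell$-adic analogue of positivity of the $(2,0)$-form in positive characteristic that would place $f^*|_{T_\ell(X)}$ in a compact group. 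This is exactly why the actual Esnault--Srinivas proof is, as this paper puts it, ``quite delicate'': it runs a case-by-case argument through the Bombieri--Mumford classification of surfaces, treating the $K3$ case by a specific lifting theorem of Lieblich--Maulik rather than a generic spreading-out, and using Hodge theory only after that lift has been arranged.

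Two further remarks. First, your reduction of part (1) to part (2) is conceptually sound once the cup-product pairing is seen to be non-degenerate on $W$, but this nondegeneracy (so that $H^2 = W \oplus W^\perp$) is itself not automatic and requires Hodge index input on the algebraic side; it is not ``care with the radical'' so much as a real lemma. Second, it is worth noting that the paper you are reading proves a generalization of part (1) (Theorem \ref{Gromov_arbitrary_base_field}) in arbitrary dimension by an entirely different and more robust method---Katz--Messing's algebraicity of K\"unneth projectors, Lefschetz trace, and the intersection-number estimate of Proposition \ref{basic_bound_intersection_product}---which completely bypasses the finite-order statement in part (2). That route avoids classification and the transcendental-part difficulty altogether, but, as the paper itself observes, it does not recover part (2), which remains tied to surface-specific input.
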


The proof of Theorem \ref{Esnault-Srinivas} is quite delicate,~and uses (among other things) the classification of smooth projective surfaces in positive characteristics.~It also relies on lifting of certain $K3$ surfaces to characteristic $0$ based on \cite{Maulik-Lieblich},~and uses Hodge theory to resolve this case.~Given the motivic nature of Theorem \ref{Esnault-Srinivas},~it is natural to ask for analogues of the Gromov-Yomdin theory over an arbitrary base field (see \cite{Esnault-Srinivas},~Section 6.2).~Indeed one has the following result.

\begin{thm}\cite[Corollary 1.2]{Shu19}\label{Shu_self_map}
Let $f \colon X \to X$ be any self map\footnote{In what follows by self map we will mean an endomorphism of a scheme.} of a proper scheme over an arbitrary field $k$.~Let $\ell$ be a prime invertible in $k$ and let $\bar{k}$ be an algebraic closure of $k$.~Then (for any embedding of $\Q_{\ell}$ in $\C$) the spectral radius of $f^*_{\bar{k}}$ on the entire $\ell$-adic cohomology equals the spectral radius for its action on $\oplus_{i}H^{2i}(X_{\bar{k}},\Q_{\ell})$.
\end{thm}
  
The proof Theorem \ref{Shu_self_map} uses the theory of weights \cite{Del80} to obtain restrictions on the analytic properties of a zeta function associated to the self map $f$ \cite[Definition 2.12]{Shu19}.~The analytic properties of the zeta functions are then used to obtain restrictions on the behavior of the spectral radius with respect to the weight filtration \cite[Theorem 1.1]{Shu19},~which in turn implies Theorem \ref{Shu_self_map}.

Now suppose $X$ is a smooth projective variety,~then one can ask more refined questions and in particular look for analogues of Theorem \ref{Esnault-Srinivas},~(1) for higher dimensional varieties.~In this direction we have the following result.
 
\begin{thm}[\cite{Tuyen_Weil_conjectures}]\label{Truong}

Let $f \colon X \to X$ be a dominant self map of a smooth projective variety over an algebraically closed field $k$.~Let $\ell$ be a prime invertible in $k$.~Then for any embedding of $\Q_{\ell}$ inside $\C$,~the spectral radius of $f^*$ on the $\ell$-adic cohomology of $X$ equals the spectral radius of $f^*$ acting on its Chow group modulo numerical equivalence.

\end{thm}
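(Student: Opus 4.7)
Set $d = \dim X$, write $\rho_{\mathrm{coh}} := \rho(f^* \mid H^*(X, \Q_\ell))$ and $\rho_{\mathrm{num}} := \rho(f^* \mid N^*(X)_\Q)$, and fix a very ample class $[H]$, abbreviating $d_n^p := (f^n)^* H^p \cdot H^{d-p} \in \Z$. The inequality $\rho_{\mathrm{num}} \leq \rho_{\mathrm{coh}}$ is essentially formal: the $\ell$-adic cycle class map $c \colon \mathrm{CH}^p(X)_\Q \to H^{2p}(X, \Q_\ell)$ is $f^*$-equivariant, and its kernel (homologically trivial cycles) lies inside the kernel of $\mathrm{CH}^p(X)_\Q \twoheadrightarrow N^p(X)_\Q$, because any intersection number between algebraic cycles is determined by their cohomology classes. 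Hence $N^p(X)_\Q$ is a quotient of the $f^*$-stable subspace $\mathrm{im}(c) \subseteq H^{2p}$ and the bound on spectral radii follows summand by summand. To attack the reverse inequality I would first rewrite $\rho_{\mathrm{num}}$ intersection-theoretically: a Khovanskii--Teissier / log-concavity argument applied to the sequence $(f^n)^* H^p$ (valid in any characteristic and needing only positivity of $[H]$) yields $\rho(f^* \mid N^p) = \lim_n (d_n^p)^{1/n}$, so $\rho_{\mathrm{num}} = \max_p \lim_n (d_n^p)^{1/n}$.

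The task then reduces to bounding every generalized eigenvalue $\mu$ of $f^*$ on every $H^i(X, \Q_\ell)$ by $\max_p \lim_n (d_n^p)^{1/n}$. The plan is to rewrite $d_n^p$ as the cup product pairing $\langle (f^n)^*[H]^p, [H]^{d-p}\rangle \in H^{2d}(X,\Q_\ell) \cong \Q_\ell$, and then invoke Deligne's $\ell$-adic hard Lefschetz theorem so that the pairings $\alpha \mapsto \alpha \cup [H]^{d-i}$ are non-degenerate on each $H^i$. One would like to conclude that the intersection numbers $d_n^p$ see every eigenvalue of $f^*$ on even-degree cohomology, reducing the spectral radius on $H^{2p}$ to numerical growth; for odd-degree cohomology, Poincar\'e duality on $X$ and the analogous numerical growth for self-correspondences of $X \times X$ (via the graph $\Gamma_{f^n}$) should play the parallel role. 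Concretely, the K\"unneth formula identifies the algebraic intersection numbers $\Gamma_{f^n} \cdot (p_1^* H^i \cdot p_2^* H^{d-i})$ on $X \times X$ with sums of traces of $f^{n*}$ across cohomological degrees, coupling information on all $H^i$ to a single numerical sequence controlled by positivity of $[H] \boxplus [H]$ on $X \times X$.

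The principal obstacle is that, without the standard conjectures, algebraic cycles are not known to span $H^{2*}(X, \Q_\ell)$ and the K\"unneth projectors are not known to be algebraic, so one cannot naively pair a putative eigenvector $v$ of $f^*$ against a dual algebraic class tailored to it. Worse, distinct eigenvalues of the same modulus can conspire to cancel in any single pairing $\langle (f^n)^* v, w\rangle$, hiding the true dominant eigenvalue behind a lower-order trace. The key idea of O'Sullivan as developed by Truong, which I would follow, is to replace a single fixed algebraic test class by the entire sequence of graphs $\{\Gamma_{f^n}\}_{n \geq 1}$ and work with the asymptotics of the resulting sequence of algebraic intersection numbers, extracting individual eigenvalue moduli (rather than traces) by an averaging / polarization-in-$n$ argument combined with positivity of ample classes on $X \times X$. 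Making this iteration-plus-positivity maneuver precise enough to disentangle eigenvalues of equal absolute value, in the total absence of Hodge-theoretic positive definiteness, is the step I expect to be by far the most delicate.
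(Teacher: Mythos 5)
Your proposal correctly identifies the easy direction $\rho_{\mathrm{num}} \leq \rho_{\mathrm{coh}}$, correctly senses that one must test against the graphs $\Gamma_{f^n}$ and control algebraic intersection numbers, and correctly flags the non-algebraicity of K\"unneth projectors over a general base as the apparent obstruction. However, there are three genuine gaps, two of which are essentially one and the same missing idea.

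First and foremost, you never reduce to the case where $k$ is an algebraic closure of a finite field. Over such a $k$, the work of Katz--Messing (building on Deligne's proof of the Weil conjectures) shows that the K\"unneth projectors $\pi^i_X$ \emph{are} algebraic cycles, modulo homological equivalence. This is exactly the tool that lets one write $\mathrm{Tr}(f^{m*};H^i(X,\Q_\ell)) = (-1)^i\,[\Gamma_{f^m}]\cdot\pi^{2r-i}_X$ as an honest algebraic intersection number on $X\times X$, so your claim that "the K\"unneth projectors are not known to be algebraic" is precisely what fails after the specialization. Since both the $\ell$-adic spectral radii $\mu_i$ and the intersection numbers $\delta_j(f^m) = f^{m*}[H]^j\cdot[H]^{r-j}$ specialize from $k$ to a finite-field situation, the bound $\mu_i\le\max_j\limsup_m|\delta_j(f^m)|^{1/m}$ only needs to be proved there, and then transports back. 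Without this reduction the whole strategy is stuck at the obstacle you yourself name.

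Second, your "averaging / polarization-in-$n$" maneuver to extract eigenvalue moduli from traces, which you flag as the most delicate step, is both vague and unnecessary. The classical fact
$\limsup_m |\mathrm{Tr}(T^m)|^{1/m}=\rho(T)$
(Proposition \ref{Trace_Spectral_radius} in the paper, provable from Newton's identities: if all power sums of the top-modulus eigenvalues tended to zero the characteristic polynomial would degenerate) already takes care of the cancellation you worry about. Once the traces are algebraic intersection numbers, one bounds them using the degree estimate of Proposition \ref{basic_bound_intersection_product} (the Roberts-style moving argument with cones over linear spaces) and the computation $\deg(\Gamma_{f^m})=\sum_j\delta_{r-j}(f^m)\deg([\P^{r-j}]\times[\P^j])$, and then relates $\delta_j(f^m)$ to $\|f^{m*}\|$ on the numerical Gromov algebra via Gelfand's spectral radius formula. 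There is no need to disentangle individual eigenvalues of equal modulus. Your Khovanskii--Teissier / log-concavity route to $\rho_{\mathrm{num}}=\max_p\lim_n(d_n^p)^{1/n}$ is a plausible alternative for that one sub-step, but it cannot substitute for the missing Katz--Messing input, which is where the actual content of the theorem lies.
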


Motivated by the methods in \cite{Tuyen_Weil_conjectures} and in an forthcoming article \cite[Appendix B]{SV20} we obtain the following generalization of Theorem \ref{Esnault-Srinivas} to higher dimensions.

\begin{thm}\label{main_theorem_introduction}

Let $f \colon X \to X$ be a self map of a smooth projective variety over an arbitrary algebraically closed field $k$.~Let $\ell$ be a prime invertible in $k$.~Let $[\omega] \in H^2(X,\Q_{\ell})$ be an ample class.~Then the spectral radius of $f^*$ acting on $H^*(X,\Q_{\ell})$ (with respect to $\tau:\Q_{\ell} \hookrightarrow \C$) is independent of $\tau$,~and coincides with the spectral radius of $f^*$ on the numerical Gromov algebra (see Definition \ref{homological_Gromov}).

\end{thm}

Theorem \ref{main_theorem_introduction} in particular shows that for smooth projective varieties with $\text{Pic}(X)=\Z$,~the spectral radius of a self map on the $\ell$-adic cohomology coincides with its degree (see Corollary \ref{correspondence_variety_Serre}).~Finally in a hope to generalize Theorem \ref{Esnault-Srinivas},~(2),~we propose an approach via a Looijenga-Lunts-Verbistky Lie algebra \cite{LL97,Ver96}.~The reader is referred to Section \ref{LLV} for details. 

\subsubsection*{Acknowledgements:}

I would like to thank Prof.~Vasudevan Srinivas for helpful comments and suggestions,~and in particular for motivating me to study the Gromov algebra even before Theorem \ref{main_theorem_introduction} came to existence.~I would also like to thank Radhika Ganapathy for numerous discussions and clarifications regarding LLV Lie algebra.~Finally I am thankful to the referee for a careful reading of the article and the many suggestions which have improved the exposition and readability of the article.
  
\section{Some preliminaries from intersection theory}\label{prelimnaries_intersection_theory}

Throughout this article we will work over an arbitrary algebraically closed field $k$.~ A variety (over $k$) is a finite type,~separated and integral scheme over $k$.~Let $\ell$ be a prime invertible in $k$.~We fix once and for all an isomorphism of $\Q_{\ell}(1)$ with $\Q_{\ell}$.~Hence we will talk of cycles classes with values in $\ell$-adic cohomology without the Tate twist.~We also fix an embedding 
 
 \begin{equation}\label{embedding_chapter_four}
 \tau \colon \Q_{\ell} \hookrightarrow \C.
 \end{equation}

\subsection{Summary of results needed from intersection theory} Let $X$ be a smooth,~projective variety over $k$ of dimension $r$.~Let $Z^*(X)$ be the free abelian group generated by the set of closed subvarieties of $X$ and graded by codimension \cite[Section 1.3]{Fulton_Intersection_Theory}.~Let $A^{*}(X)$ be the graded (by codimension) Chow ring of $X$ \cite[Section 8.3]{Fulton_Intersection_Theory}.~The group underlying $A^*(X)$ is a graded quotient of $Z^*(X)$ by rational equivalence.~We shall write $A(X) \colonequals \oplus_{i}A^{i}(X)$ when we want to ignore the grading and the ring structure.

The \textit{components} of an algebraic cycle $[Z] \in Z^{*}(X)$ are the subvarieties of $X$ which appear in $[Z]$ with non-zero coefficients.~To any closed \textit{subscheme} $Y \subseteq X$ we can associate an effective cycle $[Y]$ in $Z^*(X)$ whose components are precisely the irreducible components of $Y$ \cite[Section 1.5]{Fulton_Intersection_Theory}.

 Let $A_{\text{num}}^*(X)$ (resp. $A_{\text{num}}^*(X)_{\Q}$,~resp.  $A_{\text{num}}^*(X)_{\R}$) be the graded (by codimension) ring of algebraic cycles on $X$ modulo numerical equivalence with $\Z$ (resp. $\Q$,~resp. $\R$) coefficients \cite[Section 1.1]{Kleiman_algebraic_cycles}.
 
 Let $A_{\text{hom}}^*(X)_{\Q}$ be the graded (by codimension) ring of algebraic cycles on $X$ modulo homological equivalence (with respect to $\ell$-adic cohomology \footnote{Conjecturally $A_{\text{hom}}^*(X)_{\Q}$ is independent of $\ell$} ),~with $\Q$ coefficients (see \cite[Chapitre 4]{SGA 4.5} and \cite[Chapter 6]{Milne_Etale_Cohomology} for a construction of cycle classes).~Note that $A_{\text{num}}^*(X)_{\Q}$ is a quotient of $A_{\text{hom}}^*(X)_{\Q}$,~which in turn is a $\Q$-subalgebra of $\oplus_{i}H^{2i}(X,\Q_{\ell})$.
 
 For a morphism $f \colon X \to Y$ of smooth,~projective varieties over $k$,~there is a pullback map $f^* \colon A^{*}(Y) \to A^*(X)$ and a pushforward map $f_* \colon A(X) \to A(Y)$ \cite[Proposition 8.3 (a) and Theorem 1.4]{Fulton_Intersection_Theory}.~The pullback is a morphism of graded rings and the pushforward is a morphism of abelian groups.~Further they satisfy a projection formula \cite[Proposition 8.3 (c)]{Fulton_Intersection_Theory}.~In particular there exists a group homomorphism $\pi_{X*} \colon A(X) \to A(\Spec(k)) \simeq \Z \cdot [\Spec(k)]$ \cite[Definition 1.4]{Fulton_Intersection_Theory}.~ $A_{\text{num}}^*(X)$ and $A_{\text{hom}}^*(X)_{\Q}$ also have similar functorial properties \cite[Section 1]{Kleiman_algebraic_cycles}.~  We shall denote the intersection product on rings by `$\cdot$'.~For cycles $[Z]$ and $[Z']$ of complimentary co-dimension in $X$,~by abuse of notation we shall also denote the integer $\pi_{X*}([Z] \cdot [Z'])$ by $[Z] \cdot [Z']$.
 
% More generally we shall work with correspondences (of degree $0$) of $X$.~These by definition are cycles in $A^{r}(X \times_k X)$.~Given a correspondence  \in A^{r}(X \times_k X)$ we have a functorial pullback map $[\Gamma]^*:A^*(X) \to A^*(X)$ and push forward map $[\Gamma]_*:A^*(X) \to A^*(X)$ \cite[Section 3.1.4]{Yves_Andre}.~Moreover $A^{r}(X \times_k X)$ has the structure of a (in general non-commutative) ring given by composition of correspondences \cite[Section 3.1.3]{Yves_Andre}.~For a correspondence $[\Gamma]$,~we denote by $[\Gamma^{\circ m}]$ the correspondence obtained by the $m^{\mathrm{th}}$ self composition of $[\Gamma]$ with respect to this ring structure.~Finally we also note that given a correspondence $[\Gamma] \in A^{r}(X \times_k X)$,~there is an associated linear action $[\Gamma]^*$ on $H^*(X,\Q_{\ell})$ \cite[Section 3.4.1]{Yves_Andre}.~Moreover this action is compatible with corresponding pullback on Chow group under the cycle class map \cite[Section 3.3.2]{Yves_Andre}.
 
Let $[\P_k^{s}] \in A^{n-s}(\P^n_k),~0 \leq s \leq n$ be the class of a $s$-dimensional linear subspace of $\P_k^n$.~The Chow ring $A^*(\P^n_k)$ is isomorphic to the graded ring  $\Z[x]/(x^{n+1})$ under the map $[\P^{n-1}] \to x$ \cite[Proposition 8.4]{Fulton_Intersection_Theory} and the class $[\P_k^{s}]$ generates the free abelian group $A^{n-s}(\P^n_k),~0 \leq s \leq n$ \cite[Example 1.9.3]{Fulton_Intersection_Theory}.

\begin{defn}

The \textit{degree} of $[Z] \in A^{s}(\P^{n}_k)$ is the integer $[Z] \cdot [\P^{s}_k]$.~For a subvariety $Z $ of $\P^n_k$ by $\text{deg}(Z)$ we mean $\text{deg}([Z])$.

\end{defn}

%Given any two subvarieties $V,W \subseteq \P^n_k$ one has a notion of a \textit{ruled join} $J(V,W)$ of $V$ and $W$ (see \cite{Fulton_Intersection_Theory} Example 8.4.5).~It can be realized as a closed subvariety of $\P^{2n+1}_k$ of degree $\text{deg}(V) \text{deg}(W)$.~When $W$ is a linear sub-space,~the ruled join is also called as the \textit{cone} of $V$ over $W$ and denoted by $\bar{VW}$.~It is a closed subvariety of $\P^{2n+1}_k$ of degree equal to degree of $V$.
 
For any two smooth,~projective varieties $X$ and $Y$ (over $k$),~there is an exterior product map $A^*(X) \otimes_{\Z} A^{*}(Y) \to A^*(X \times_k Y)$ \cite[Section 1.10]{Fulton_Intersection_Theory},~which is a morphism of graded rings \cite[Example 8.3.7]{Fulton_Intersection_Theory}.~We shall denote the image of $[Z] \otimes [Z']$ by $[Z] \times [Z']$.

In what follows,~we will need a bound (see Proposition \ref{basic_bound_intersection_product}) well known to experts and proved using standard techniques.~For ease of exposition we present a short proof using Chow's moving Lemma and the join construction \cite[Example 8.4.6]{Fulton_Intersection_Theory}.
\begin{defn}\label{proper_intersection}

Two subvarieties $V$ and $W$  in a smooth projective variety $X$  are said to \textit{intersect properly},~if the each component of $V \cap W$ has the right dimension (i.e. $\text{dim}(V)+\text{dim}(W)-\text{dim}(X)$).

\end{defn}

\begin{rmk}\label{cycles_intersect_properly}

In a similar vein,~cycles $[V]$ and $[W]$ in $Z^*(X)$ are said to \textit{intersect properly} if each component of $[V]$ intersects each component of $[W]$ properly.

\end{rmk}

Suppose now $i \colon X \hookrightarrow \P^n_k$ is a closed embedding of a smooth,~projective variety of dimension $r$.~Fulton's definition of intersection multiplicities implies the following statement \cite[Section 6.2, Section 7.1]{Fulton_Intersection_Theory}.

\begin{prop}\label{intersection_formula_proper}

Let $[C] \in Z^*(\P^n_k)$ be a cycle on $\P_k^n$ which intersects $[X]$ properly.~Then,

\begin{center}
$i^{*}([C])=\sum_j i(Z_j;[X],[C])[Z_j] \in A^*(X)$,
\end{center}

\noindent where $Z_j$'s are the irreducible components of the intersection of $X$ with the components of $[C]$,~and $i(Z_j;[X],[C])$'s are the intersection multiplicities along the $Z_j$'s \cite[Definition 7.1]{Fulton_Intersection_Theory}.

\end{prop}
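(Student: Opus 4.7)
The plan is to unpack Fulton's construction of the refined intersection product for the regular embedding $i$, specialised to the case of proper intersection. By $\Z$-linearity in $[C]$, I first reduce to the case $[C] = [V]$ for a single irreducible subvariety $V \subseteq \P^n_k$ meeting $X$ properly. Since $X$ and $\P^n_k$ are both smooth, the closed embedding $i$ is regular of codimension $n-r$, with normal bundle $N \colonequals N_{X/\P^n_k}$. Writing $W \colonequals X \cap V$ (as a closed subscheme of $\P^n_k$), the proper intersection hypothesis says that every irreducible component $Z_j$ of $W$ has the expected dimension $\dim V + r - n$, equivalently $\codim_V W = n-r$.

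I then appeal to Fulton's construction (\cite{Fulton_Intersection_Theory},~Chapters~5,~6 and Section~8.1): the Gysin pullback $i^{*}[V]$ is defined as the refined intersection product $X \cdot V$ in $\P^n_k$, obtained as follows.~One specialises $[V]$ to the normal cone $C_W(V) \subseteq N|_W$, applies the inverse Gysin isomorphism $A_*(W) \xrightarrow{\sim} A_{*+n-r}(N|_W)$, and pushes the resulting class forward along $W \hookrightarrow X$.~Under proper intersection, $C_W(V)$ has pure dimension $\dim V = \dim W + (n-r)$ and is therefore a top-dimensional closed subscheme of $N|_W$.~Its fundamental cycle decomposes canonically as $[C_W(V)] = \sum_j m_j [N|_{Z_j}]$ for uniquely determined positive integers $m_j$, indexed by the irreducible components $Z_j$ of $W$.

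The crucial identification is that, by Fulton's Definition~7.1, the multiplicity $m_j$ is exactly $i(Z_j;[X],[V])$.~Applying the Gysin isomorphism carries each $[N|_{Z_j}]$ to $[Z_j] \in A^*(W)$, and pushforward to $X$ then yields the claimed formula $i^*[V] = \sum_j i(Z_j;[X],[V])\,[Z_j]$.~The main (and essentially only non-formal) point is verifying that proper intersection forces $C_W(V)$ to be top-dimensional in $N|_W$, so that the refined class reduces to a component-by-component sum of geometric multiplicities; everything else is bookkeeping within Fulton's framework, requiring no input beyond the definitions themselves.
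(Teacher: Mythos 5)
The paper gives no proof of this proposition; it simply points to Fulton (Sections 6.2 and 7.1) and treats the statement as an immediate consequence of the definition of the refined intersection product. Your proposal correctly and faithfully unpacks that same appeal to the normal-cone construction — the reduction to an irreducible $V$, the pure-dimensionality of $C_W(V)$ forcing the decomposition $[C_W(V)] = \sum_j m_j[N|_{Z_j}]$, and the identification $m_j = i(Z_j;[X],[V])$ via Fulton's Definition 7.1 — so the approach matches; the only slip is a wording one: the map you call the ``inverse Gysin isomorphism $A_*(W) \xrightarrow{\sim} A_{*+n-r}(N|_W)$'' runs backwards, since what is actually applied is the zero-section pullback $s^{*}\colon A_{*+n-r}(N|_W) \to A_*(W)$, as your subsequent paragraph correctly uses.
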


\begin{rmk}\label{intersection_notation}

By abuse of notation the cycle $\sum_j i(Z_j;X,C)[Z_j] \in Z^*(X)$ will also be denoted by $[C] \cdot [X]$.~Moreover if $[C]$ is an effective cycle so is $[C] \cdot [X]$ \cite[Proposition 7.1]{Fulton_Intersection_Theory}.

\end{rmk}

Let $V \subseteq X$ be a closed subvariety of dimension $d$.~Let $L \subseteq \P_k^n$ be a linear subspace of dimension $n-r-1$ disjoint from $X$.~We denote by $C_L(V) \subseteq \P^n_k$,~the cone of $V$ over $L$ \cite[Section 2]{Roberts} or equivalently the join of $V$ and $L$ \cite[Example 8.4.5]{Fulton_Intersection_Theory}.~It is a subvariety of dimension $n+d-r$,~and of degree equal to the degree of $V$.~Moreover $V$ is an irreducible component of $C_L(V) \cap X$ and every component of $C_L(V) \cap X$ is of dimension equal to $d$ \cite[Lemma 2]{Roberts}.

\begin{rmk}\label{cone_intersection}

Hence for any such $L$,~we see that $C_L(V)$ and $X$ intersect properly (see Defintion \ref{proper_intersection}) and we let $[C_L(V)] \cdot [X]$ denotes the corresponding cycle on $X$ (see Remark \ref{intersection_notation}).

\end{rmk}

For an arbitrary cycle $[V]=\sum_i m_i[V_i] \in Z^{r-d}(X)$ we define

\begin{center}
 $[C_L([V])]:=\sum_i m_i [C_L(V_i)] \in Z^{r-d}(\P^n)$.
 \end{center}

Let $V$ and $W$ be closed subvarieties of $X$.~We define the \textit{excess} of $V$ (relative to $W$) to be $-\infty$ if they do not intersect.~Else it is defined to be the maximum of the (non-negative) integers

\begin{center}

$\text{dim}(Y)-\text{dim}(V)-\text{dim}(W)+\text{dim}(X)$,

\end{center}

\noindent where $Y$ runs through all the components of $V \cap W$.~We denote the excess by $e_W(V)$.~For a cycle $[V] \colonequals \sum_i m_i[V_i]$ in $Z^*(X)$,~we define $e_W([V]) \colonequals \sum_i m_i e_W(V_i)$.~We have the following result from \cite{Roberts} (used there to prove the `Chow moving Lemma').

\begin{lem}\label{main_lemma_Roberts}\cite[Main Lemma]{Roberts}

Let $i \colon X \hookrightarrow \P^n_k$ be a smooth,~projective closed subvariety of dimension $r$.~Let $W$ be a subvariety of $X$.~For any cycle $[V] \in Z^*(X)$,~there exists a dense open subset $U$ of $G(n,n-r-1)$,~the Grassmanian of linear subspaces in $\P^n$ of dimension $n-r-1$,~such that for any closed point $x \in U$,~if $L_x$ denotes the corresponding linear subspace then,

\begin{enumerate}

\item[(1)] $L_x \cap X =\emptyset$.

\item[(2)] $e_W\left ([C_L \left ([V] \right )] \cdot [X]-[V] \right ) \leq \text{max}\left ( e_W  ([V])-1,0 \right ) $.

\end{enumerate}

\end{lem}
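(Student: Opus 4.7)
My plan is to follow the classical generic projection argument, as in \cite{Roberts}. By linearity of both sides in $[V]$, I would first reduce to the case where $[V] = V$ is a single irreducible subvariety of dimension $d$; the general case then follows by intersecting the dense opens obtained for each component.

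For (1), consider the incidence variety $\sI := \{(x, L) \in X \times G(n, n-r-1) : x \in L\}$. The fibre of $\sI \to X$ over any point $x$ is a Schubert variety of codimension $r+1$ in $G(n, n-r-1)$, whence $\dim \sI = r + \dim G(n, n-r-1) - (r+1) = \dim G(n, n-r-1) - 1$. The projection $\sI \to G(n, n-r-1)$ is therefore not dominant, and its complement $U_1$ is a dense open subset of $G(n, n-r-1)$ over which $L_x \cap X = \emptyset$.

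Next, fix $L = L_x$ with $x \in U_1$. Linear projection from $L$ restricts on $X$ to a finite morphism $\pi \colon X \to \P^r_k$, and the set underlying $C_L(V) \cap X$ is $\pi^{-1}(\pi(V))$. Proposition \ref{intersection_formula_proper} then realises $[C_L(V)] \cdot [X]$ as an effective cycle supported on $\pi^{-1}(\pi(V))$, and for $L$ in a further dense open $U_2 \subseteq U_1$ the map $\pi|_V$ is birational onto its image, so $V$ appears with multiplicity $1$. Hence the residual cycle
\[
[C_L(V)] \cdot [X] - [V] = \sum_j m_j [Z_j]
\]
consists only of components $Z_j \neq V$ of dimension $d$ satisfying $\pi(Z_j) \subseteq \pi(V)$.

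The main content is (2). The defining property of a residual component is that every $z \in Z_j$ admits some $v \in V$ with $v \neq z$ and $\pi(z) = \pi(v)$; equivalently, the secant line $\overline{zv}$ meets $L$. For each component $Y'$ of $Z_j \cap W$, this property, combined with a dimension count applied to the join $J(V, Y') := \bigcup_{v \in V,\, y \in Y'} \overline{vy} \subseteq \P^n_k$ and the incidence condition $L \cap J(V, Y') \neq \emptyset$, should force $L$ into a nontrivial closed subset of $G(n, n-r-1)$ whenever $\dim Y' > \dim Z_j + \dim W - r + \max(e(V) - 1, 0)$. Ranging over the finitely many possible residual components and their components of intersection with $W$, I would obtain a dense open $U \subseteq U_2$ over which the excess bound in (2) holds.

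The hard part will be the final step: one must verify that the ``bad'' loci in $G(n, n-r-1)$ -- corresponding to residual components with excess too large -- are genuinely proper closed subsets, and one must simultaneously handle the two regimes $e(V) = 0$ (where proper intersection of each residual piece with $W$ is needed) and $e(V) \geq 1$ (where the excess need only strictly decrease). This is where Roberts' original argument in \cite{Roberts} performs the careful incidence/dimension bookkeeping.
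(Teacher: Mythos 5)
The paper itself gives no proof of this lemma: it is cited verbatim as Roberts' Main Lemma, so there is no in-paper argument to compare against. Your sketch does correctly reproduce the shape of Roberts' argument. The incidence-variety dimension count for part (1) is right, and your plan for part (2) — bound the excess of the residual cycle by showing the bad configurations force $L$ into a proper closed subset of the Grassmannian — is the right idea. You are also candid that the dimension bookkeeping is deferred to Roberts.

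That deferred bookkeeping, however, is not a routine gap to fill; it is the entire content of the statement. The difficulty your outline slides past is that the residual components $Z_j$ (and hence the components $Y'$ of $Z_j \cap W$) depend on $L$, so one cannot fix $Z_j$ and $Y'$ and then parametrize the bad $L$'s as if these varieties were static; the whole configuration moves with $L$. Roberts circumvents this by a single global incidence correspondence — roughly the set of triples $(v, y, L)$ with $v \in V$, $y \in X$, $v \neq y$ and the secant $\overline{vy}$ meeting $L$ — carrying out one dimension estimate there, and only afterwards slicing by $W$ to extract the bound $e\left([C_L([V])]\cdot[X] - [V]\right) \leq \max\left(e([V])-1, 0\right)$. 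Two further claims in your outline need justification rather than assertion: (i) that $V$ appears with multiplicity one in $[C_L(V)]\cdot[X]$ does not follow formally from $\pi|_V$ being birational onto its image — it requires a local computation of the intersection multiplicity at the generic point of $V$; and (ii) birationality of a generic linear projection restricted to $V$ (for $\dim V < r$) is itself a fact requiring a separate argument, particularly in positive characteristic. As written your proposal is a correct plan, not a proof, and what is missing is precisely what Roberts supplies.
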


\subsection{An estimate for intersection product}

As before let $i \colon X \hookrightarrow \P^n_k$ be a smooth,~projective closed subvariety of dimension $r$.~Let $V$ and $W$ be closed subvarieties of $X$.~Let $d$ be the dimension of $V$.~The following lemma is now easy to deduce from Lemma \ref{main_lemma_Roberts}.

\begin{lem}\label{moving_lemma}

There exist a positive integer $k \leq r+1$ and a sequence of effective cycles $\{[V_j]\}_{0 \leq j \leq k}$ and $\{[E_j]\}_{1 \leq j \leq k}$ in $Z^{r-d}(X)$ such that,

\begin{enumerate}

\item[(1)] $[V_0]=[V]$ in $Z^{r-d}(X)$.

\item[(2)] $[V_j]=[E_{j+1}]-[V_{j+1}]$ in $Z^{r-d}(X)$ for all $0 \leq j \leq k-1$.

\item[(3)] For all $j \geq 1$,~the $[E_j]$'s are `ambient' cycles that is,~$[E_j]=i^{*}\left ( \text{deg} \left ( [V_{j-1}] \right )[\P^{n-d+r}_{k}] \right )$ in $A^{r-d}(X)$.

\item[(4)] Every component of $[V_{k-1}]$ and $[V_k]$ intersects $W$ properly (see Definition \ref{proper_intersection} and Remark \ref{cycles_intersect_properly}).

\end{enumerate}

In particular 

\begin{center}
$[V]=\sum_{j=1}^{k}(-1)^{j+1}[E_j]+(-1)^{k}[V_{k}]$ in $Z^{r-d}(X)$.
\end{center}

\end{lem}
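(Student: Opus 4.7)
The plan is to iterate Lemma~\ref{main_lemma_Roberts}, producing cycles whose excess against $W$ drops by one at each step. Set $e_0 := e(V)$; since $V$ is a single subvariety (multiplicity one) and any component $Y$ of $V \cap W$ satisfies $\dim Y \leq \min(d, \dim W)$, one gets $e_0 \leq r - \max(d, \dim W) \leq r$. This bound is what will yield $k \leq r+1$.

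Put $[V_0] := [V]$, and assume inductively that an effective cycle $[V_j] \in Z^{r-d}(X)$ has been constructed with $e([V_j]) \leq \max(e_0 - j,\,0)$. Applying Lemma~\ref{main_lemma_Roberts} to $[V_j]$ with the subvariety $W$ yields a dense open $U_j \subseteq G(n,n-r-1)$; pick any closed point $x_j \in U_j$ with corresponding linear subspace $L_{x_j}$, and set
\[
[E_{j+1}] := [C_{L_{x_j}}([V_j])] \cdot [X] \in Z^{r-d}(X),
\]
which is effective by Remarks~\ref{cone_intersection} and~\ref{intersection_notation}. Since the cone of a subvariety has the same degree in $\P^n_k$ as the base, and $A^{r-d}(\P^n_k) = \Z \cdot [\P^{n+d-r}_k]$, as a class
\[
[E_{j+1}] = i^{*}\bigl(\deg([V_j]) \cdot [\P^{n+d-r}_k]\bigr) \in A^{r-d}(X),
\]
giving condition~(3). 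Define $[V_{j+1}] := [E_{j+1}] - [V_j]$, so that condition~(2) is immediate; Lemma~\ref{main_lemma_Roberts} then yields $e([V_{j+1}]) \leq \max(e_0 - j - 1,\,0)$, continuing the induction.

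Choose $k$ to be the smallest index such that $e([V_{k-1}]) = e([V_k]) = 0$. Because the excess strictly decreases while positive, $k \leq e_0 + 1 \leq r+1$. When a cycle with non-negative coefficients has excess zero, each of its components either misses $W$ or meets it properly, which is condition~(4); the telescoping identity
\[
[V] = \sum_{j=1}^k (-1)^{j+1}[E_j] + (-1)^k [V_k]
\]
then follows by iterated substitution from~(2).

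The main obstacle is the effectiveness of $[V_{j+1}] = [E_{j+1}] - [V_j]$. This does not follow from the excess bound alone but is built into Roberts' Main Lemma: for $L$ in the dense open $U_j$, each component of $V_j$ appears in $[C_L(V_j)] \cdot [X]$ with the same multiplicity as in $[V_j]$ (the relevant intersection multiplicity being one along each component, by the transversality of the generic cone), so the subtraction cancels these contributions while leaving the other components with non-negative coefficients.
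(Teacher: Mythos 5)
Your proposal is correct and follows essentially the same route as the paper: iterate Roberts' Main Lemma to build the cones, track the excess relative to $W$, and stop once it reaches zero, with the ambient description of $[E_j]$ coming from the degree-preservation of the cone construction and the proper-intersection pullback formula. The only inessential overclaim is the assertion that the intersection multiplicity of $C_L(V_j)$ with $X$ along each component of $V_j$ is exactly one; the paper's argument needs only that each component of $[V_{j-1}]$ is a component of $[E_j]$ (Roberts, Lemma 2) and that intersection multiplicities are positive, so multiplicity $\geq 1$ already gives effectivity of $[V_j]$.
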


\begin{proof}

Let 

\begin{center}

$[V_0] \colonequals [V] \in Z^{r-d}(X)$.

\end{center}

For any integer $j \geq 1$,~having defined $[V_{j-1}] \in Z^{r-d}(X)$ and proven that it is effective,~we define 

\begin{equation}\label{define_E_j}
[E_j] \colonequals [C_{L_j}([V_{j-1}])] \cdot [X]  \in Z^{r-d}(X), 
\end{equation}

\noindent where $L_j$ is linear sub-space of $\P^n$ of dimension $n-r-1$ (see Remark \ref{cone_intersection}),~chosen such that 

\begin{center}
$e(i^*[C_{L_j}([V_{j-1}])]-[V_{j-1}]) \leq \text{max}(e([V_{j-1}]-)1,0)$ (see Lemma \ref{main_lemma_Roberts}).
\end{center}

Here the excess is with respect to $W$.~Since $[C_{L_j}(V_{j-1})]$ and $[X]$ intersect properly \cite[Lemma 2]{Roberts},~Remark \ref{intersection_notation} implies that $[E_j]$ is an effective cycle.~For any integer $j$ having defined $[V_{j-1}]$ and $[E_{j}]$,~we define,

\begin{center}

$[V_{j}] \colonequals [E_j]-[V_{j-1}]$ in $Z^{r-d}(X)$.

\end{center}

For any subvariety $V \subseteq X$,~$V$ is an irreducible component of $C_L(V) \cap X$ \cite[Lemma 2]{Roberts},~thus the effectivity of $[V_{j}]$ for any $j \geq 1$,~is a consequence of the effectivity of $[E_j]$.

Since $e([V_0])=e([V]) \leq r$,~for any $j \geq r$,~the excess $e([V_j])=0$.~Let $k-1$ be the smallest integer $j$ with the property that $e([V_{k-1}])=0$.~Then every component of the algebraic cycles $[V_{k-1}]$  and $[V_k]$ intersects $W$ properly. 

For any $j\geq 1$ since $C_L([V_{j-1}])$ and $X$ intersect properly \cite[Lemma 2]{Roberts},~Proposition \ref{intersection_formula_proper} implies that 

\begin{equation}\label{first_step_lemma}
[E_j]=i^{*}\left ([C_L([V_{j-1}])] \right) \in A^{r-d}(X).
\end{equation}

For any $j \geq 1$ since $[C_L([V_{j-1}])]$ as a cycle on $\P_k^n$ has degree equal to the degree of $[V_{j-1}]$ \cite[Example 8.4.5]{Fulton_Intersection_Theory},~thus (\ref{first_step_lemma}) implies that,

\begin{center}\label{second_step_lemma}
$[E_j]=i^{*}\left ( \text{deg} \left ( [V_{j-1}] \right )[\P^{n-d+r}_{k}] \right )$ in $A^{r-d}(X)$.
\end{center}

\end{proof}

Now we derive a basic estimate which is needed later.

\begin{prop}\label{basic_bound_intersection_product}

Let $X \subseteq \P^n_k$ be a smooth,~projective variety.~Then for any two subvarieties $V$ and $W$ of complimentary dimension in $X$,~$|[V] \cdot [W]| \leq  C\text{deg}(V)\text{deg}(W)$,~for a constant $C$ independent of $V$ and $W$ .

%In fact $C$ can be chosen to be $(\text{dim}(X)+2)\text{deg}(X)^{\text{dim}(X)+1}$.

\end{prop}

\begin{proof}

%Using Chow's moving Lemma (\cite{Roberts}) we may assume that $[V]$ and $[W]$ intersect properly,~and by linearity of intersection product we may also assume that $[V]$ and $[W]$ are classes of subvarieties (say $V$ and $W$ respectively).~Thus $[V] \cap [W]=\sum_j i(Z_j;[V],[W])[Z_j] \in A^*(X)$,~here $Z_j$'s are the irreducible components of $V \cap W$. 
%
%The degrees of $Z_j$'s are bounded by $\text{deg}(V)\text{deg}(W)$ (\cite{Fulton_Intersection_Theory},~Example 8.4.6).~To bound the intersection multiplicities $i(Z_j;[V],[W])$,~we observe that for a generic linear subspace $L$ of dimension $n-d-1$,~$[Z_j]$ is a component of $[C_L(V)] \cap [W]$ (here the intersection product is in $A^*(\P^n_k)$) and that $i(Z_j;[V],[W])=i(Z_j;[C_L(V)],[W])$ (\cite{Fulton_Intersection_Theory},~Example 11.4.3).~Since $\text{deg}(C_L(V))=\text{deg}(V)$ (\cite{Fulton_Intersection_Theory},~Example 8.4.5),~Bezout theorem implies $i(Z_j;[V],[W]) \leq \text{deg}(V) \text{deg}(W)$.

We use Lemma \ref{moving_lemma} to construct a sequence of algebraic cycles $\{[V_j]\}_{0 \leq j \leq k}$ and $\{[E_j]\}_{1 \leq j \leq k}$ in $Z^{r-d}(X)$ where $d$ is the co-dimension of $V$ in $X$ and satisfying properties (1)-(4) in Lemma \ref{moving_lemma}.

Since

\begin{center}
$[V]=\sum_{j=1}^{k}(-1)^{j+1}[E_j]+(-1)^{k}[V_{k}]$ in $Z^{r-d}(X)$,
\end{center}

\noindent one has that

\begin{equation}\label{basic_bound_first}
|[V] \cdot [W]| \leq \sum_{j=1}^{k}|[E_j] \cdot [W]|+|[V_k] \cdot [W]|. 
\end{equation}

Note that $[E_j]=i^{*} \left ( \text{deg}([V_{j-1}])[\P^{n+d-r}_{k}] \right )$ (see Lemma \ref{moving_lemma},~(3)) and hence for every $j \geq 1$,~

\begin{equation}\label{basic_bound_second}
[E_j] \cdot [W]=\text{deg}(W)\text{deg}([V_{j-1}]).
\end{equation} 
 
Since every component of $[V_{k-1}]$ intersects $[W]$ properly,~$[V_k] \cdot [W]$ is bounded above by $[E_k] \cdot [W]=\text{deg}(W)\text{deg}([V_{k-1}])$ \cite[Proposition 7.1]{Fulton_Intersection_Theory}.~Combining (\ref{basic_bound_first}) and (\ref{basic_bound_second}) we get,

\begin{equation}\label{basic_bound_third}
|[V] \cdot [W]| \leq \left ( \sum_{j=1}^{k}\text{deg}([V_{j-1}])+\text{deg}([V_{k-1}]) \right) \text{deg}(W).
\end{equation}

Projection formula implies that for every $j \geq 1$,

\begin{center}
$\text{deg}([E_j])=\text{deg}(X) \text{deg}([V_{j-1}])$.
\end{center}

Since the $[E_j]$'s and $[V_j]$'s are effective,~

\begin{center}
$\text{deg}([V_{j}]) \leq \text{deg}([E_j])=\text{deg}(X)\text{deg}([V_{j-1}])$.
\end{center}

Thus for every $j \geq 1$ 

\begin{equation}\label{basic_bound_fourth}
\text{deg}([V_{j}]) \leq \text{deg}(X)^{j}\text{deg}(V) \leq \text{deg}(X)^{r+1}\text{deg}(V).
\end{equation}

Thus (\ref{basic_bound_third}) and (\ref{basic_bound_fourth}) together imply that

\begin{equation}\label{final_bound_uniform}
 |[V] \cdot [W]| \leq (r+2)\text{deg}(X)^{r+1} \text{deg}(V)\text{deg}(W).
 \end{equation}
 
\end{proof}

\begin{rmk}
If $V$ and $W$ intersect properly, then the bound in (\ref{final_bound_uniform}) can be improved to  $[V] \cdot [W] \leq \text{deg}(V)\text{deg}(W)$ \cite[Lemma 10.12]{Hru12}.~Furthermore the bound in Proposition \ref{basic_bound_intersection_product} can be generalized (in an appropriate sense) to Gysin pullbacks under regular embeddings of quasi-projective varieties and proved without recourse to the moving lemma \cite[Appendix B]{SV20}.
\end{rmk}

\section{Gromov algebra}\label{Gromov_sub-algebra}

Let $i:X \hookrightarrow \P^n_k$ be a smooth,~projective variety over an algebraically closed field $k$ of dimension $r$.~Let $[H] \in A^1(X)$ be the class of a hyperplane section.~Let $\omega$ be the cohomology class of $[H]$ in $H^2(X,\Q_{\ell})$.

For $j \geq 1$,~let $[H]^{j}$ (resp. $\omega^j$) denote the $j^{\mathrm{th}}$ self intersection product (resp. self cup product) of $[H]$ (resp. $\omega$) in $A^*(X)$ (resp. $H^*(X,\Q_{\ell})$).~Let $f \colon X \to X$ be a self map and $[\Gamma_f] \in A^{r}(X \times_k X)$ be the graph correspondence.~For integers $0 \leq j \leq r$ let 

\begin{equation}\label{intersection_number}
\delta_j(f):=[H]^{r-j} \cdot f^{*}([H]^j)= f^{*}([H]^j) \cdot [H]^{r-j}.
\end{equation}

Note that we have an equality

\begin{equation}\label{intersection_number_cup_product}
\delta_j(f)=\text{Tr}_{X}(\omega^{r-j} \cup f^*(\omega^j))=\text{Tr}_X( f^*(\omega^j) \cup \omega^{r-j}),
\end{equation}

\noindent where $\cup$ is the cup product on $H^{*}(X,\Q_{\ell})$ and $\text{Tr}_X$ is the trace map $\text{Tr}_X:H^{2r}(X,\Q_{\ell}) \to \Q_{\ell}$.

%\noindent We have a commutative diagram,
%
%
%
%\begin{center}
%
%$\xymatrixcolsep{4pc} \xymatrix{  & X \times_k X \ar[d]^{1_X \times f^m} & \\
%                  X \ar@{^{(}->}[r]^{\Gamma_{f^m}}   \ar@{^{(}->}[ur]^{\Delta_X}                 & X \times_k X         \ar@{^{(}->}[r]^{i \times i} & \P^n_k \times_k \P_k^n       }$.
%
%\end{center}
%
%Here $\Gamma_{f^m}$ is the graph of $f^m$.
%
\begin{lem}

Using the above notations,

\begin{equation}\label{class_graph}
 (i \times i)_*[\Gamma_f]=\sum_{j=0}^{r} \delta_{r-j}(f)([\P_k^{r-j}] \times [\P_k^j])
\end{equation}

\end{lem}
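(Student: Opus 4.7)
The plan is to expand the pushforward cycle in the standard monomial basis of $A^*(\P^n_k\times_k\P^n_k)$ and identify each coefficient via the intersection-pairing duality on that ring.

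Recall that $A^*(\P^n_k\times_k\P^n_k)\cong\Z[h_1,h_2]/(h_1^{n+1},h_2^{n+1})$, with $h_i$ the pullback of the hyperplane class from the $i$-th factor; equivalently $\{[\P^a_k]\times[\P^b_k]\}_{0\le a,b\le n}$ (of dimension $a+b$) forms a $\Z$-basis. Since $\left((i\times i)\circ\Gamma_{f^m}\right)_*([X])$ has dimension $r$, it may be written as $\sum_{j=0}^{r}c_j\bigl([\P^{r-j}_k]\times[\P^j_k]\bigr)$ for unique $c_j\in\Z$, and the task reduces to identifying each $c_j$ with the corresponding $\delta$-coefficient.

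To read off $c_j$, I would intersect both sides with the dual basis element $[\P^{n-r+j}_k]\times[\P^{n-j}_k]$ and push forward to $\Spec k$. Using the elementary rule $[\P^a]\cdot[\P^c]=[\P^{a+c-n}]$ in $A^*(\P^n_k)$ (and $0$ if $a+c<n$), one checks that only the $j$-th summand on the right-hand side contributes, giving $c_j$. By the projection formula applied to the closed immersion $(i\times i)\circ\Gamma_{f^m}$, the pairing on the left converts into the degree on $X$ of the pullback $\Gamma_{f^m}^*\circ(i\times i)^*\bigl([\P^{n-r+j}_k]\times[\P^{n-j}_k]\bigr)$.

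To finish, I would compute this pullback explicitly. Since $[\P^{n-s}_k]=h^s$ in $A^s(\P^n_k)$ and $i^*h=[H]$, one has $(i\times i)^*\bigl([\P^{n-r+j}_k]\times[\P^{n-j}_k]\bigr)=[H]^{r-j}\times[H]^j$ on $X\times_k X$; and the commutative diagram preceding the statement identifies $\Gamma_{f^m}$ with $(1_X\times f^m)\circ\Delta_X$, from which $\Gamma_{f^m}^*(\alpha\times\beta)=\alpha\cdot f^{m*}\beta$ for $\alpha,\beta\in A^*(X)$. Combining gives $c_j=\pi_{X*}\bigl([H]^{r-j}\cdot f^{m*}[H]^j\bigr)$, which by the definition (\ref{intersection_number}) is precisely the desired coefficient in the claimed formula. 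The argument is essentially a bookkeeping exercise in the Chow ring of $\P^n\times\P^n$, and there is no serious obstacle; the only point requiring attention is the indexing convention for the graph, since a swap of the two factors of $X\times_k X$ interchanges $\delta_j(f^m)$ with $\delta_{r-j}(f^m)$, and one must keep track of which factor carries $f^m$ throughout.
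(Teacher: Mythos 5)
Your approach and the paper's are essentially the same: expand $\bigl((i\times i)\circ\Gamma_{f^m}\bigr)_*([X])$ in the monomial basis of $A^*(\P^n_k\times_k\P^n_k)$, extract each coefficient by pairing with the complementary power of the hyperplane classes, and reduce that pairing, via the projection formula and the factorization $\Gamma_{f^m}=(1_X\times f^m)\circ\Delta_X$, to an intersection number on $X$. If anything your version is stated more carefully than the paper's, which writes the pairing as $[Y]\cdot\bigl([\P^j_k]\times[\P^{r-j}_k]\bigr)$ --- a product of two $r$-dimensional classes --- and then silently reads this as a pairing against $h_1^jh_2^{r-j}$; you use the genuine dual basis element from the start.

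However, your concluding sentence asserts a match that your own computation does not deliver. You correctly obtain the coefficient of $[\P^{r-j}_k]\times[\P^j_k]$ as $\pi_{X*}\bigl([H]^{r-j}\cdot f^{m*}[H]^j\bigr)$, which by (\ref{intersection_number}) is $\delta_j(f^m)$, \emph{not} the $\delta_{r-j}(f^m)$ appearing in (\ref{class_graph}). These differ in general: for $X=\P^1$ and $f$ the squaring map one has $\delta_0(f)=1$, $\delta_1(f)=2$, while the graph has bidegree $(2,1)$, so $[\Gamma_f]=2\bigl([\P^0_k]\times[\P^1_k]\bigr)+\bigl([\P^1_k]\times[\P^0_k]\bigr)$, matching $\delta_j$ and not $\delta_{r-j}$. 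The subscripts in (\ref{class_graph}) are in fact transposed; the paper's own proof carries the same slip, since it pairs against $[\P^j_k]\times[\P^{r-j}_k]$ rather than against the dual of $[\P^{r-j}_k]\times[\P^j_k]$. This is harmless downstream --- the only use is $\deg(\Gamma_{f^m})=\sum_j\delta_j(f^m)\deg\bigl([\P^{r-j}_k]\times[\P^j_k]\bigr)=\sum_j\delta_j(f^m)\binom{r}{j}$, which is invariant under $j\leftrightarrow r-j$ --- but you explicitly flagged the need to track which factor carries $f^m$ and then did not follow through. Having done the computation correctly, you should state that the coefficient is $\delta_j(f^m)$ (equivalently, rewrite the right-hand side as $\sum_{j}\delta_{r-j}(f^m)\bigl([\P^{j}_k]\times[\P^{r-j}_k]\bigr)$), rather than claim agreement with a formula your calculation contradicts.
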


\begin{proof}

%Clearly we can assume $m=1$.~Let $[Y]:=\left ( (i \times i) \circ \Gamma_{f} \right )_*([X])$.

The exterior product map $A^*(\P^n_k) \otimes_{\Z} A^{*} (\P^n_k) \to A^*(\P^n_k \times_k \P^n_k)$ is an isomorphism of graded rings \cite[Example 8.3]{Fulton_Intersection_Theory} and hence 

\begin{equation}\label{class_of_Y}
(i \times i)_*[\Gamma_f]=\sum_{j=0}^{r} n_j ( [\P_k^{r-j}] \times [\P_k^j])
\end{equation}

\noindent where for any $j \geq 0$,~$n_j=\left ( \left (i \times i \right )_*[\Gamma_f] \cdot \left ( [\P^{n-r+j}_k] \times [\P^{n-j}_k] \right ) \right )$.~The projection formula thus implies that

\begin{equation}\label{class_of_Y_II}
n_j= [\Gamma_f] \cdot \left ([H]^{r-j} \times [H]^j \right),
\end{equation}

\noindent and the result follows from Lefschetz trace formula \cite[Section 3.3.3]{Yves_Andre}.

\end{proof}

%\begin{defn}\label{numerical_Gromov}
%
%The \textit{numerical Gromov sub-algebra} $\text{A}^{Gr}_{\text{num}}(f,[H])_{\Q}$ (resp. $\text{A}^{Gr}_{\text{num}}(f,[H])_{\R})$ is the smallest $f^*$ stable sub-algebra of $A^{*}_{\text{num}}(X)_{\Q}$ ( resp. $A^{*}_{\text{num}}(X)_{\R}$ ) generated by $[H]$.
%
%\end{defn}

\begin{defn}\label{homological_Gromov}

The \textit{homological Gromov algebra} $\text{A}^{Gr}_{\text{hom}}(f,\omega)_{\Q}$ \footnote{In what follows when $k=\C$,~we use Betti cohomology instead of $\ell$-adic cohomology to define homological equivalence and hence to define $\text{A}^{Gr}_{\text{hom}}(f,\omega)_{\Q}$.~Thus when $k=\C$ there is no dependence on an auxillary prime $\ell$.} (resp. the \textit{numerical Gromov algebra}, $\text{A}^{Gr}_{\text{num}}(f,[H])_{\Q}$)   is the smallest $f^*$-stable (unital) sub-algebra of $A^{*}_{\text{hom}}(X)_{\Q}$  (resp. $A^{*}_{\text{num}}(X)_{\Q}$) containing $\omega$ (resp. $[H]$).

\end{defn}

%\begin{rmk}
%When $[\Gamma]$ comes from a self-map $f \colon X \to X$,~we denote the numerical (resp. homological) Gromov algebra by $\text{A}^{Gr}_{\text{hom}}(f,\omega)_{\Q}$ (resp. $\text{A}^{Gr}_{\text{hom}}(f,\omega)_{\Q}$).
%\end{rmk}

%The homological Gromov algebra with rational coefficients is a sub-ring of the cohomology ring and hence the homogenous classes $[\alpha] \in \text{A}^{Gr}_{\text{hom}}(f,\omega)_{\Q}$,~whose cup-product with any class of the complimentary degree in $\text{A}^{Gr}_{\text{hom}}(f,\omega)_{\Q}$ is $0$,~form a graded ideal in $\text{A}^{Gr}_{\text{hom}}(f,\omega)_{\Q}$.

%\begin{defn}\label{numerical_Gromov}
%
% The \textit{numerical Gromov algebra} $\text{A}^{Gr}_{\text{num}}(f,[H])_{\Q}$ is the smallest $f^*$-stable sub-algebra of $A^{*}_{\text{num}}(X)_{\Q}$ containing $[H]$.
%
%
%\end{defn}

%The numerical Gromov algebra with real coefficients $\text{A}^{Gr}_{\text{num}}([\Gamma],[H])_{\R}$ is the $\R$-algebra $\text{A}^{Gr}_{\text{num}}([\Gamma],[H])_{\Q} \otimes_\Q \R$. 

%\begin{rmk}
%
%Evidently $\text{A}^{Gr}_{\text{num}}(f,[H])_{\Q}$ specializes well.~Note that $\text{A}^{Gr}_{\text{num}}(f,[H])_{\Q}$ can also be realized as a quotient by a similarly defined ideal in the $f^*$ stable sub-algebra of $A^{*}_{num}(X)_{\Q}$ generated by $[H]$.~We do not concern ourselves here with the question as to whether this quotient map is an isomorphism.
%
%\end{rmk}

The numerical Gromov algebra with real coefficients $\text{A}^{Gr}_{\text{num}}(f,[H])_{\R}$ is the $\R$-algebra $\text{A}^{Gr}_{\text{num}}(f,[H])_{\Q} \otimes_\Q \R$.

Let $\lambda_i$ (resp. $\chi_i$) be the spectral radius \footnote{In what follows the spectral radius of a linear endomorphism of a vector space over a sub field of $\C$ is defined to be the maximum in absolute value of its complex eigenvalues.} of $f^*$ acting on $A_{\text{num}}^i(X)_{\Q},~0 \leq i \leq \text{dim}(X)$ (resp. $A_{\text{hom}}^i(X)_{\Q},~0 \leq i \leq \text{dim}(X)$).~Let $\lambda^{Gr}$ and $\chi^{Gr}$ be the spectral radii of $f^*$ acting on $\text{A}^{Gr}_{\text{num}}(f,[H])_{\Q}$ and $\text{A}^{Gr}_{\text{hom}}(f,\omega)_{\Q}$ respectively.~Note that $\lambda^{Gr}$ is also the spectral radius of $f^*$ acting on $\text{A}^{Gr}_{\text{num}}(f,[H])_{\R}$.

Let $\mu_j$ be the spectral radius (with respect to $\tau$ in (\ref{embedding_chapter_four})) of $f^*$ acting on $H^{j}(X,\Q_{\ell}),~0 \leq j \leq 2 \, \text{dim}(X)$.~Following lemma is obvious.

\begin{lem}\label{Basic_inequality}

Using the above notations we have inequalities
\begin{center}
$\lambda^{Gr} \leq \max \limits_{ 0 \leq i \leq \text{dim}(X)} \, \lambda_{i} \leq \max \limits_{ 0 \leq i \leq \text{dim}(X)} \, \chi_{i} \leq \max \limits_{0 \leq j \leq \, 2\text{dim}(X)} \, \mu_j.$
\end{center}
Further 
\begin{center}
$\lambda^{\text{Gr}} \leq \chi^{Gr} \leq \max \limits_{0 \leq i \leq \text{dim}(X)} \, \chi_{i}$.
\end{center}

\end{lem}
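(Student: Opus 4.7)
The plan is to reduce every inequality to the same elementary fact from linear algebra: if $T$ is an endomorphism of a finite-dimensional vector space $V$ and $W \subseteq V$ is $T$-stable (or $V \twoheadrightarrow U$ is a $T$-equivariant surjection), then the spectral radius of $T|_W$ (resp.\ of $T$ on $U$) is at most the spectral radius of $T$ on $V$. Everything else is bookkeeping about which subspaces/quotients are $f^*$-stable.

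The preliminary observation I would make is that both Gromov algebras are \emph{graded} subalgebras. Indeed, $[H]$ (resp.\ $\omega$) lies in codimension one, the intersection product respects the grading, $f^*$ preserves codimension, and sums preserve the grading within a fixed degree; so the three operations used to generate $\text{A}^{Gr}_{\text{num}}(f,[H])_{\Q}$ from $[H]$ only produce elements in pure codimensions, giving a decomposition
\[
\text{A}^{Gr}_{\text{num}}(f,[H])_{\Q} = \bigoplus_{i=0}^{\dim X} \text{A}^{Gr,i}_{\text{num}}(f,[H])_{\Q}, \qquad \text{A}^{Gr,i}_{\text{num}}(f,[H])_{\Q} \subseteq A^{i}_{\text{num}}(X)_{\Q},
\]
and likewise for $\text{A}^{Gr}_{\text{hom}}(f,\omega)_{\Q}$. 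Each graded piece is an $f^*$-stable subspace. Hence $\lambda^{Gr}$ is the maximum over $i$ of the spectral radius of $f^*$ on $\text{A}^{Gr,i}_{\text{num}}$, and this maximum is bounded by $\max_i \lambda_i$ by the stability principle; the analogous bound $\chi^{Gr} \leq \max_i \chi_i$ holds for the same reason.

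For the middle inequality $\max_i \lambda_i \leq \max_i \chi_i$, I would invoke that $A^{i}_{\text{num}}(X)_{\Q}$ is by definition an $f^*$-equivariant quotient of $A^{i}_{\text{hom}}(X)_{\Q}$, so applying the quotient half of the stability principle degree-by-degree yields $\lambda_i \leq \chi_i$. For the inequality $\max_i \chi_i \leq \max_j \mu_j$, $A^{i}_{\text{hom}}(X)_{\Q}$ embeds as an $f^*$-stable $\Q$-subspace of $H^{2i}(X,\Q_\ell)$ (via the cycle class map, which is $f^*$-equivariant; after tensoring with $\Q_\ell$ we may invoke the stability principle for subspaces), giving $\chi_i \leq \mu_{2i}$.

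Finally, for $\lambda^{Gr} \leq \chi^{Gr}$, the quotient map $A^{*}_{\text{hom}}(X)_{\Q} \twoheadrightarrow A^{*}_{\text{num}}(X)_{\Q}$ is an $f^*$-equivariant surjection of graded rings sending $\omega$ to $[H]$. The image of $\text{A}^{Gr}_{\text{hom}}(f,\omega)_{\Q}$ under this map is an $f^*$-stable subalgebra of $A^{*}_{\text{num}}(X)_{\Q}$ containing $[H]$, hence contains $\text{A}^{Gr}_{\text{num}}(f,[H])_{\Q}$ by the minimality in Definition \ref{numerical_Gromov}; in fact it equals it, since the image is generated by $[H]$ under $f^*$ and the algebra operations. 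Thus we obtain an $f^*$-equivariant surjection $\text{A}^{Gr}_{\text{hom}}(f,\omega)_{\Q} \twoheadrightarrow \text{A}^{Gr}_{\text{num}}(f,[H])_{\Q}$, from which $\lambda^{Gr} \leq \chi^{Gr}$ follows. There is no substantive obstacle here; the care required is entirely in checking that the gradings and $f^*$-stabilities are preserved by each map in sight, which is immediate from the functoriality properties of $A^{*}_{\text{num}}$, $A^{*}_{\text{hom}}$, and cohomology recalled in Section \ref{prelimnaries_intersection_theory}.
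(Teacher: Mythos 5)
Your proof is correct, and since the paper declares the lemma ``obvious'' and supplies no argument, your write-up is precisely the bookkeeping the author expects the reader to do: restricting to an $f^*$-stable subspace, or passing to an $f^*$-equivariant quotient, can only shrink the set of eigenvalues, hence the spectral radius; one then marches along the chain
$\text{A}^{Gr}_{\text{num}} \subseteq A^*_{\text{num}}(X)_\Q \twoheadleftarrow A^*_{\text{hom}}(X)_\Q \hookrightarrow \oplus_i H^{2i}(X,\Q_\ell)$,
together with $\text{A}^{Gr}_{\text{hom}} \twoheadrightarrow \text{A}^{Gr}_{\text{num}}$ induced by the quotient map and minimality.

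Two small remarks, neither a gap. First, the preliminary observation that the Gromov algebras are graded is true (they are generated as algebras by the classes $f^{m*}[H]$, all in degree one), but it is a slight detour: since $f^*$ preserves the grading of the ambient $A^*_{\text{num}}(X)_\Q$, its spectral radius there is already $\max_i \lambda_i$, and the subspace principle applies directly to $\text{A}^{Gr}_{\text{num}} \subseteq A^*_{\text{num}}(X)_\Q$ without decomposing the Gromov algebra itself. Second, in the step $\chi_i \leq \mu_{2i}$ you ``tensor with $\Q_\ell$''; this is fine, but the hidden point worth making explicit is that the characteristic polynomial of $f^*$ on $A^i_{\text{hom}}(X)_\Q$ has rational coefficients and $\tau$ restricts to the identity on $\Q \subset \Q_\ell$, so the complex absolute values of its roots are unambiguous and coincide before and after base change. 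With that spelled out, every inequality is an instance of the same linear-algebra fact, exactly as you say.
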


We will also need the following lemma.

\begin{lem}\label{bound_lim_sup}

Let $\{a_{m,i}\}_{m \geq 1},1 \leq i \leq s$ be a collection of sequences of complex number.~Let $b_{i},~i \leq i \leq s$ be arbitrary complex numbers.~Then

\begin{center}

$\limsup \limits_{m} |\sum_{i=1}^{s} a_{m,i}b_{i}|^{1/m} \leq \max \limits_{1 \leq i \leq s} \limsup \limits_{m} |a_{m,i}|^{1/m}$.

\end{center}

\end{lem}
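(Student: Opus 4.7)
The plan is to use a uniform bound argument. Write $\alpha_i \colonequals \limsup_m |a_{m,i}|^{1/m}$ and $\alpha \colonequals \max_{1 \leq i \leq s} \alpha_i$. The inequality is trivial if the left-hand side is $0$, so I may assume the left-hand side is positive. For a fixed $\epsilon > 0$, by definition of the limsup there exists (for each $i$) an integer $M_i$ such that $|a_{m,i}|^{1/m} < \alpha_i + \epsilon \leq \alpha + \epsilon$ for all $m \geq M_i$. Setting $M \colonequals \max_i M_i$, we then have the uniform bound $|a_{m,i}| < (\alpha + \epsilon)^m$ valid for every $i$ and every $m \geq M$.

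Applying the triangle inequality, for all $m \geq M$,
\begin{equation*}
\left| \sum_{i=1}^s a_{m,i} b_i \right| \leq \sum_{i=1}^s |a_{m,i}| |b_i| \leq (\alpha + \epsilon)^m \sum_{i=1}^s |b_i|.
\end{equation*}
Setting $C \colonequals \sum_{i=1}^s |b_i|$ (a finite positive constant, using that the $b_i$ are non-zero), taking $m$-th roots gives
\begin{equation*}
\left| \sum_{i=1}^s a_{m,i} b_i \right|^{1/m} \leq (\alpha + \epsilon) \cdot C^{1/m}.
\end{equation*}

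Since $C^{1/m} \to 1$ as $m \to \infty$, taking $\limsup_m$ on both sides yields $\limsup_m |\sum_i a_{m,i}b_i|^{1/m} \leq \alpha + \epsilon$. As $\epsilon > 0$ was arbitrary, we conclude $\limsup_m |\sum_i a_{m,i}b_i|^{1/m} \leq \alpha$, which is the desired inequality. The argument is entirely elementary; the only minor point to be careful about is handling the case $\alpha = 0$, which is covered uniformly since we bound all $|a_{m,i}|$ by the \emph{same} quantity $(\alpha + \epsilon)^m$ rather than by individual $(\alpha_i + \epsilon)^m$. There is no substantive obstacle here—this is a standard limsup manipulation used to pass from termwise growth bounds to a bound on finite linear combinations.
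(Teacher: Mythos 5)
Your proof is correct and rests on the same core mechanism as the paper's: the uniform bound $|a_{m,i}| \le (\alpha+\epsilon)^m$ for all $i$ and all large $m$, followed by the triangle inequality and the observation that a fixed positive constant raised to the power $1/m$ tends to $1$. The paper takes a longer route to the same place, first reducing to $b_i = 1$ (using $|b|^{1/m} \to 1$) and then to $s = 2$ by a grouping/induction step before running the $\epsilon$-argument; your version treats general $s$ and general nonzero $b_i$ in a single stroke, showing those reductions were not actually needed. No gaps.
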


\begin{proof}\footnote{The short and elegant proof here was suggested to us by the referee.}
The radius of absolute convergence of the power series $\sum_{ m \geq 0} \sum_{i=1}^sb_ia_{m,i}z^m$ is at least as large as the minimum of the radius of absolute convergence of $\sum_{m \geq 0} a_{m,i}z^m$ as $1 \leq i \leq s$.~The desired result is now an immediate consequence of the Cauchy-Hadamard formula for the radius of absolute convergence.

%
%For every $i$ replacing the sequence $\{a_{m,i}\}_{m \geq 1}$ by $\{a_{m,i}b_{i}\}_{m \geq 1}$ we can assume without any loss of generality that $b_{i}=1$ for all $i$,~since for any $b \neq 0$,~$\lim \limits_{m \to \infty} |b|^{1/m}=1$.
%
%The claimed inequality is obvious for $s=1$.~Suppose that $s \geq 2$.~Let $\{a'_{m,2}\}_{m \geq 1}:=\{\sum_{i=2}^{s} a_{m,i}\}_{m \geq 1}$.~Then  $|\sum_{i=1}^{s} a_{m,i}b_{i}|^{1/m}=|a_{m,1}+a_{m,2}'|^{1/m}$.~Hence the claimed inequality is true for a collection of $s$ sequences iff it is true for a collection of $s-1$ sequences.~Hence we are reduced to the case when $s=2$.
%
%Note that
%
%\begin{equation}\label{sup_reduction}
%\limsup \limits_{m} |a_{m,1}+a_{m,2}|^{1/m}=\limsup \limits_{m} \frac{|a_{m,1}+a_{m,2}|^{1/m}}{2^{1/m}}=\limsup \limits_{m} (\frac{|a_{m,1}+a_{m,2}|}{2})^{1/m}.
%\end{equation}
%
%Without any loss of generality we can assume that $a:=\limsup \limits_m |a_{m,1}|^{1/m} \geq \limsup \limits_{m} |a_{m,2}|^{1/m}$. Hence for any $\e>0$ there exists an integer $M >>0$ such that for all $m \geq M$ and $i=1,2$, $|a_{m,i}| \leq (a+\e)^{m}$.~In particular $|a_{m,1}+a_{m,2}| \leq 2(a+\e)^{m}$ for all $m \geq M$.~Thus $\limsup \limits_{m} (\frac{|a_{m,1}+a_{m,2}|}{2})^{1/m} \leq \limsup \limits_m (a+ \e)= a+ \e$ for any $\e>0$.~Thus (\ref{sup_reduction}) implies the required bound.

\end{proof}

Let $V$ be any finite dimensional vector space over $\R$ (or $\C)$ and $T:V \to V$ a linear map.~Let $|| \cdot ||$ be any matrix norm.~In what follows we will make use of the following theorem due to Gelfand \cite[Theorem 18.9]{Rudin_Real_Complex_Analysis}.

\begin{thm}\label{Gelfand_Spectral_radius}

$\limsup \limits_{m} ||T^{m}||^{1/m}=\rho(T)$,~where $\rho(T)$ is the spectral radius of $T$.

\end{thm}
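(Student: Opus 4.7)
The plan is to establish the two inequalities $\rho(T) \leq \limsup_m \|T^m\|^{1/m}$ and $\limsup_m \|T^m\|^{1/m} \leq \rho(T)$ separately; since all matrix norms on $\mathrm{End}(V)$ are equivalent (up to a multiplicative constant, which disappears after taking $m$-th roots and a limit), it suffices to prove the statement for one convenient norm.

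For the lower bound, I would note that if $\lambda \in \C$ is an eigenvalue of (the complexification of) $T$ with eigenvector $v$, then $T^m v = \lambda^m v$, so $|\lambda|^m = |\lambda^m| \leq \|T^m\|_{\mathrm{op}}$ for every $m \geq 1$, with $\|\cdot\|_{\mathrm{op}}$ the operator norm. Taking $m$-th roots gives $|\lambda| \leq \|T^m\|^{1/m}$, and hence $|\lambda| \leq \liminf_m \|T^m\|^{1/m} \leq \limsup_m \|T^m\|^{1/m}$. Taking the maximum over all eigenvalues yields $\rho(T) \leq \limsup_m \|T^m\|^{1/m}$.

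For the upper bound, I would pass to a Jordan basis over $\C$. In such a basis $T^m$ is block-diagonal with each block the $m$-th power of a Jordan block $J_\lambda$ of some size $k \leq d := \dim V$. Direct computation (or the binomial expansion $J_\lambda^m = (\lambda I + N)^m = \sum_{j=0}^{k-1}\binom{m}{j}\lambda^{m-j}N^j$, using $N^k = 0$) shows that each entry of $J_\lambda^m$ is bounded by $\binom{m}{k-1}|\lambda|^{m-k+1}$, hence by a polynomial in $m$ of degree at most $d-1$ times $\rho(T)^m$. Accounting for the change-of-basis constant one gets a bound of the form
\[
\|T^m\| \leq C\,(1+m)^{d-1}\rho(T)^m
\]
for some $C>0$ independent of $m$. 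Taking $m$-th roots and using $C^{1/m} \to 1$ and $(1+m)^{(d-1)/m} \to 1$ gives $\limsup_m \|T^m\|^{1/m} \leq \rho(T)$, completing the proof.

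The only genuine subtlety is the upper bound, which really needs the structure of $T$ (Jordan form, or equivalently the polynomial growth of $\|T^m\|/\rho(T)^m$ coming from generalized eigenvectors); the lower bound is essentially automatic from the definition of eigenvalue. A more analytic variant of the upper bound, which avoids Jordan form altogether, is to observe that the resolvent $(zI-T)^{-1} = \sum_{m \geq 0} z^{-m-1} T^m$ is holomorphic on $\{|z|>\rho(T)\}$, so the Cauchy–Hadamard radius-of-convergence formula applied to this Laurent series gives $\limsup_m \|T^m\|^{1/m} \leq \rho(T)$ directly. Either route suffices, and neither requires anything beyond standard linear algebra or one-variable complex analysis.
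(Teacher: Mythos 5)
Your argument is correct, but note that the paper does not actually prove this result: it simply cites it, as Theorem~18.9 of Rudin's \emph{Real and Complex Analysis}. Rudin's proof is the general Banach-algebra version of the spectral radius formula, established via the resolvent $(zI-T)^{-1}$, its holomorphy on $\{|z|>\rho(T)\}$, and the Cauchy--Hadamard radius-of-convergence criterion for its Laurent series at infinity --- in other words, exactly the ``analytic variant'' you sketch in your last paragraph. Your primary route, by contrast, exploits the finite-dimensionality assumed in the paper and goes through the Jordan normal form over $\C$, giving the explicit polynomial bound $\|T^m\| \leq C(1+m)^{d-1}\rho(T)^m$. This is more elementary (no complex analysis beyond eigenvalue theory) and arguably more transparent for the application at hand, where $V$ really is a finite-dimensional space such as $\text{A}^{Gr}_{\text{num}}(f,[H])_{\R}$; the Banach-algebra argument has the advantage of generality and of avoiding a choice of Jordan basis. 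One small imprecision: the claim that each entry of $J_\lambda^m$ is bounded by $\binom{m}{k-1}|\lambda|^{m-k+1}$ is not literally correct for all $m$ (for instance, when $k=2$, $m=1$, $|\lambda|>1$, the diagonal entry $\lambda$ exceeds $\binom{1}{1}|\lambda|^{0}=1$); the entries are $\binom{m}{s}\lambda^{m-s}$ for $0\leq s\leq k-1$, and one should bound each by $m^{d-1}\rho(T)^m\cdot\max(1,\rho(T)^{-(d-1)})$ rather than by a single binomial term. This does not affect the conclusion $\|T^m\|\leq C(1+m)^{d-1}\rho(T)^m$, which is what the $m$-th-root limit requires.
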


%Though the following can be possibly deduced by other standard results,~we attempt to give an elementary argument.
%
%\begin{lem}\label{roots_of_unity_bound}
%
%Let $\mu_i,~1 \leq i \leq n$ be complex numbers of unit modulus.~Then for any $\e >0$,~there exist infinitely many integers $m$ such that,~for every integer $i \in [1,n]$ one has $|\mu_i^m-1| < \e$.~In particular for any such $m$,~$\text{Re}(\mu_i^{m})>1-\e$.
%
%\end{lem}
%
%\begin{proof}
%
%First we observe that,~given any $\e>0$,~it suffices to produce \textit{one} $m$ which does the job. This is because,~by making $\e$ smaller we can then produce infinitely many such $m$.
%
%Let $p$ be any integer greater than $\frac{2 \pi}{\e}$.~Then we can cover the unit circle by $p$ many arcs $J_{i},~1 \leq i \leq p$,~each which has an arc length less than $\e$.
%
%Let $T^{n}$ be the real torus of dimension $n$. Then $T^n$ can be covered by $p^n$ sets,~each of the form $J_{i(1)} \times J_{i(2)} \cdots J_{i(n)}$ with $1 \leq i(1),~i(2) \cdots ,i(n) \leq p$.
%
%Consider the infinite sequence of (possibly non distinct) points $\{(\mu_{1}^k,\mu_{2}^{k},~\cdots,~\mu_{n}^{k})\}_{k \geq 1} \in T^{n} $.~Clearly there exist \text{distinct} positive integers $k$ and $k'$ such that both $(\mu_{1}^k,\mu_{2}^{k},~\cdots,~\mu_{n}^{k})$ and $(\mu_{1}^k,\mu_{2}^{k},~\cdots,~\mu_{n}^{k})$ belong to $J_{i(1)} \times J_{i(2)} \cdots J_{i(n)}$,~for some indices $1 \leq i(1),~i(2) \cdots ,i(n) \leq p$ (this includes the case when the all the $\mu_i$'s are roots of unity).
%
%Clearly $m=|k-k'|$ does the job.
%
%\end{proof}
%

Let $K$ be a normed field such that there exists an embedding $\tau:K \hookrightarrow \C$ of normed fields.~Let $V$ be any finite dimensional vector space over $K$ and $T:V \to V$ a linear map.~We shall need the following standard result which we state without a proof.

\begin{prop}\label{Trace_Spectral_radius}

$\limsup \limits_{m}|\text{Tr}(T^m)|^{1/m}=\rho(T)$,~where $\rho(T)$ is the spectral radius of $T$.

\end{prop}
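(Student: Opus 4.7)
The plan is to reduce to the case $K = \C$ via the embedding $\tau$, express $\text{Tr}(T^m)$ in terms of the eigenvalues of $T$, and then read off the precise growth rate from the singularities of an associated rational generating function.

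First, since $\tau$ is an embedding of normed fields, both $|\text{Tr}(T^m)|$ and $\rho(T)$ are unchanged under the scalar extension $V_{\C} := V \otimes_{K,\tau} \C$ with $T_{\C} := T \otimes 1$. Indeed, the characteristic polynomial of $T$ lies in $K[t]$ and its image under $\tau$ is the characteristic polynomial of $T_{\C}$, so the multiset of eigenvalues in $\C$, and hence all traces of powers, agree after applying $\tau$. We may therefore assume $K = \C$. Let $\lambda_1, \ldots, \lambda_s$ be the distinct eigenvalues of $T$ with algebraic multiplicities $n_1, \ldots, n_s$; upper-triangularizing $T$ gives $\text{Tr}(T^m) = \sum_{i=1}^{s} n_i \lambda_i^m$ for all $m \geq 1$.

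When $T$ is nilpotent, $\rho(T) = 0$ and all these traces vanish, so the equality is trivial. Otherwise, form the rational generating function
\[
F(z) \;:=\; \sum_{m \geq 1} \text{Tr}(T^m)\, z^m \;=\; \sum_{i:\,\lambda_i \neq 0} n_i \cdot \frac{\lambda_i z}{1 - \lambda_i z}.
\]
Since the $\lambda_i$ are distinct and the $n_i$ are positive integers, the poles of $F$ are exactly the points $1/\lambda_i$ with nonzero residue, so no cancellation among the partial-fraction contributions is possible. The pole of $F$ nearest the origin therefore lies at distance $1/\rho(T)$, hence the power series defining $F$ has radius of convergence exactly $1/\rho(T)$, and the Cauchy-Hadamard formula yields $\limsup_m |\text{Tr}(T^m)|^{1/m} = \rho(T)$.

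The only subtle point is the lower bound $\limsup_m |\text{Tr}(T^m)|^{1/m} \geq \rho(T)$, which would fail a priori if the dominant eigenvalue contributions to $\text{Tr}(T^m)$ suffered destructive cancellation; the generating function argument sidesteps this by producing a forced singularity at $1/\rho(T)$ whose residue cannot vanish. A more direct alternative, should one wish to avoid generating functions, is to isolate the eigenvalues $\lambda_i$ of modulus $\rho(T)$ and invoke Kronecker's equidistribution theorem on $\T^r$ to produce a subsequence $m_k$ along which each ratio $\lambda_i^{m_k}/\rho(T)^{m_k}$ is simultaneously close to $1$, forcing $|\text{Tr}(T^{m_k})| \geq \tfrac{1}{2}(\sum n_i)\, \rho(T)^{m_k}$ for $k$ large and recovering the same bound.
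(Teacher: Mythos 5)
The paper states Proposition \ref{Trace_Spectral_radius} explicitly ``without a proof,'' so there is no paper argument to compare against; your task was to supply one, and you have done so correctly. Your reduction to $K=\C$ via $\tau$ is clean, the identity $\Tr(T^m)=\sum_i n_i\lambda_i^m$ is right, and the upper bound $\limsup_m|\Tr(T^m)|^{1/m}\le\rho(T)$ is immediate. The genuine content is the lower bound, which could a priori fail through cancellation among eigenvalues of equal modulus, and your generating-function argument handles exactly this: the partial fraction $\sum_{\lambda_i\neq 0} n_i\,\lambda_i z/(1-\lambda_i z)$ has a simple pole at each $1/\lambda_i$ with residue $-n_i/\lambda_i\neq 0$, and since the $\lambda_i$ are distinct these poles cannot merge or cancel, forcing the radius of convergence to be exactly $1/\rho(T)$; Cauchy--Hadamard then gives equality. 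One tiny point of style: it is worth saying in one sentence that the upper bound on the radius of convergence (hence the inequality $\limsup\ge\rho$) comes from the pole, while the lower bound on the radius (hence $\limsup\le\rho$) is the trivial estimate $|\Tr(T^m)|\le(\dim V)\rho(T)^m$ for $\rho(T)>0$ --- as written you assert the radius ``is exactly $1/\rho(T)$'' in one breath, which is fine but compresses two directions. Your alternative via recurrence on $\T^r$ is also sound, though ``Kronecker's equidistribution theorem'' is slightly stronger than needed and its usual form assumes $\Q$-linear independence of the angles; all you need is that $0$ lies in the closure of the semigroup orbit $\{m(\theta_1,\dots,\theta_r)\bmod\Z^r: m\ge 1\}$, which holds unconditionally (a closed subsemigroup of a compact group is a group), so you might cite that fact rather than Kronecker to avoid an unstated hypothesis. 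Either route is a valid proof of the proposition the paper leaves to the reader.
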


%\begin{proof}
%
%Using the embedding $\tau$ and base changing to $\C$,~we can assume that $V$ is a complex vector space,~and $T$ is a linear operator on $V$.
%
%Since we are over $\C$,~Theorem \ref{Gelfand_Spectral_radius} (with the $\ell^{1}$-norm) implies that
%
%\begin{center}
%
%$\limsup \limits_{m}|\text{Tr}(T^m)|^{1/m} \leq \rho(T)$.
%
%\end{center}
%
%Thus it suffices to prove the reverse inequality.~Clearly we can assume $T$ has at least one non-zero eigenvalues.
%
%Let $\lambda_{i},~1 \leq i \leq n$ be the collection of non-zero eigenvalues of $T$.
%
%Let $\mu_i:=\frac{\lambda_i}{|\lambda_i|}$ be complex numbers with unit modulus.
%
%Lemma \ref{roots_of_unity_bound} shows that there exist infinitely many $m$ such that,~for every integer $i \in [1,n]$  one has $\text{Re}(\mu^m_i)>\frac{1}{2}$.
%
%Then for any such $m$
%
%\begin{center}
%
%$|\text{Tr}(T^m)| \geq \text{Re}(\text{Tr}(T^m)) \geq \sum_{i} \text{Re}(\lambda_i^m) \geq \frac{\sum_i |\lambda_i|^m}{2} \geq \frac{\rho(T)^{m}}{2}$.
%
%\end{center}
%
%Thus $\limsup_{m}|\text{Tr}(T^m)|^{1/m} \geq \rho(T)$ and we get the required equality.
%
%
%
%\end{proof}

Now we prove the principal result of this article.

\begin{thm}\label{Gromov_arbitrary_base_field}

Let $X$ be a smooth, projective variety over an arbitrary algebraically closed field $k$ of dimension $r$.~Let $[H] \in A^{1}(X)$ (respectively $\omega \in H^{2}(X,\Q_{\ell})$) be the class of an hyperplane section in the Chow group (respectively $\ell$-adic cohomology).~Let $f \colon X \to X$ be a self map of $X$.~Then all the inequalities in Lemma \ref{Basic_inequality} are in fact equalities.

Thus the spectral radius of $f^*$ acting on $H^*(X,\Q_{\ell})$ (with respect to $\tau:\Q_{\ell} \hookrightarrow \C$) is independent of $\tau$,~and coincides with the spectral radius of $f^*$ on the numerical Gromov algebra.

\end{thm}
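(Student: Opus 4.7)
The plan is to prove the one missing inequality $\max_j \mu_j \leq \lambda^{Gr}$; combined with Lemma~\ref{Basic_inequality} this forces equality throughout. The strategy is to control each trace $\text{Tr}(f^{m*}|H^j(X,\Q_\ell))$ by intersection numbers on $X \times X$ that ultimately live inside the numerical Gromov algebra, and then invoke Gelfand-type spectral radius formulas.

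First I would reduce to the case $k = \overline{\F}_q$. Spreading out $(X, f, i)$ over a finitely generated subring of $k$ and specializing to a closed point with finite residue field, smooth and proper base change preserves the $\ell$-adic cohomology together with the action of $f^*$, so each $\mu_j$ is unchanged. The intersection numbers $\delta_j(f^m) = [H]^{r-j} \cdot f^{m*}[H]^j$ also specialize cleanly, since they are defined by intersection numbers of cycles that lift over the base ring; moreover specialization can only introduce new numerical relations, so $\lambda^{Gr}$ cannot strictly increase. Hence it is enough to prove the inequality over $\overline{\F}_q$, and this elementary spreading is what lets us bypass Truong's relative dynamical theorem.

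Over $\overline{\F}_q$, the work of Katz--Messing furnishes algebraic representatives modulo numerical equivalence for the K\"unneth projectors $\pi_j$ of the diagonal class $[\Delta_X] \in H^{2r}(X \times X, \Q_\ell)$. Pick cycles $Z_j \in A^r_{\text{num}}(X \times X)_\Q$ with $[Z_j] = \pi_j$, so that $\text{Tr}(f^{m*}|H^j(X,\Q_\ell)) = (-1)^j\, \Gamma_{f^m} \cdot Z_j$ as an intersection number. Embedding $X \times X$ into a projective space via the Segre map and applying Proposition~\ref{basic_bound_intersection_product} yields
$$\bigl|\Gamma_{f^m} \cdot Z_j\bigr| \ \leq\ C \cdot \deg(\Gamma_{f^m}) \cdot \deg(Z_j),$$
where $\deg(Z_j)$ is a constant independent of $m$, while $\deg(\Gamma_{f^m})$ is a non-negative linear combination, with fixed coefficients depending only on $X \hookrightarrow \P^n$, of the numbers $\delta_{r-j}(f^m)$, as dictated by formula~(\ref{class_graph}).

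To finish, observe that each $\delta_j(f^m)$ is the value of the fixed linear functional $\alpha \mapsto [H]^{r-j}\cdot\alpha$ on $f^{m*}[H]^j$, an element of the finite-dimensional $\R$-vector space $\text{A}^{Gr}_{\text{num}}(f, [H])_\R$. Gelfand's formula (Theorem~\ref{Gelfand_Spectral_radius}) applied to $f^*$ on this algebra, together with Lemma~\ref{bound_lim_sup}, gives $\limsup_m |\delta_j(f^m)|^{1/m} \leq \lambda^{Gr}$. Chaining this with the intersection bound yields $\limsup_m |\text{Tr}(f^{m*}|H^j)|^{1/m} \leq \lambda^{Gr}$, and Proposition~\ref{Trace_Spectral_radius} identifies the left-hand side with $\mu_j$. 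The main obstacle is the specialization step: it is where one must carefully track how the numerical Gromov algebra and its spectral radius transfer from $k$ to a finite-field fiber without invoking Truong's deeper dynamical results. Once that is handled, the remainder is straightforward assembly of the ingredients already developed in Sections~\ref{prelimnaries_intersection_theory} and~\ref{Gromov_sub-algebra}.
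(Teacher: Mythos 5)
Your proposal uses the same three ingredients as the paper — the K\"unneth projectors via Katz--Messing, the degree bound from Proposition~\ref{basic_bound_intersection_product} applied after a Segre embedding, and Gelfand's formula on the numerical Gromov algebra — and arrives at the same chain of comparisons through the numbers $\delta_j(f^m)$. The difference, which you yourself flag as ``the main obstacle,'' is the organization of the specialization step, and this is where your argument has a genuine gap.

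You propose to reduce the full inequality $\max_j \mu_j \le \lambda^{Gr}$ to the case $k = \overline{\F}_q$, and for this you need $\lambda^{Gr}_{\overline{\F}_q} \le \lambda^{Gr}_k$. Your justification --- ``specialization can only introduce new numerical relations, so $\lambda^{Gr}$ cannot strictly increase'' --- silently presupposes that the Fulton specialization map on Chow groups descends to a map on cycles modulo numerical equivalence, $A^*_{\text{num}}(X_k)_\Q \to A^*_{\text{num}}(X_{\overline{\F}_q})_\Q$, commuting with $f^*$ and with intersection products. This is not automatic: a class that is numerically trivial on the generic fiber is not obviously numerically trivial after specialization, because the pairing on the special fiber sees cycles that need not lift to the generic fiber. (Homological equivalence does specialize, by smooth proper base change; numerical equivalence is the quotient, and the required inclusion of kernels is exactly what would need an argument.) Without this, the surjection $G_k \twoheadrightarrow G_{\overline{\F}_q}$ of $f^*$-modules, and hence the spectral radius inequality you need, is not established.

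The paper sidesteps this entirely, and the wording makes clear this is deliberate: it observes that the intermediate inequality $\mu_i \le \max_j \limsup_m |\delta_j(f^m)|^{1/m}$ involves only $\ell$-adic eigenvalues and honest intersection numbers, both of which are manifestly unchanged under specialization, so this bound can be proved over $\overline{\F}_q$ and transported back for free. The Gelfand step --- bounding $\limsup_m |\delta_j(f^m)|^{1/m}$ by $\lambda^{Gr}$ via the norm of the linear functional $\alpha \mapsto [H]^{r-j}\cdot \alpha$ on $\text{A}^{Gr}_{\text{num}}(f,[H])_\R$ --- is then carried out directly over the original field $k$, so the numerical Gromov algebra is never specialized. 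You should adopt this split: push only the cohomological/intersection-theoretic quantities to $\overline{\F}_q$, and keep $\lambda^{Gr}$ over $k$ throughout. (As a minor point, Lemma~\ref{bound_lim_sup} is used to combine the several sequences $\delta_j(f^m)$ coming from $\deg(\Gamma_{f^m})$, not in the Gelfand step; the Gelfand step uses only the norm estimate $|\delta_j(f^m)| \le C\,\|f^{m*}\|$.)
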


\begin{proof}
 
Let $i \colon X \hookrightarrow \P^n_k$ be an embedding with $[H]$ being the class of the hyperplane section under $i$.~Clearly it suffices to show that

\begin{equation}\label{main_inequality_needed}
\lambda^{\text{Gr}} \geq \mu_i,~0 \leq i \leq 2r.
\end{equation}

Let $[\Gamma_{f^m}] \in A^{r}(X \times_k X)$ be the graph of the $m^{\mathrm{th}}$ iterate of $f$.~As before we denote by $\delta_j(f^{m})=[H]^{r-j} \cdot (f^{m*}[H]^j)=(f^{m*}[H]^j) \cdot [H]^{r-j}$ (see Definition (\ref{intersection_number})).~We shall first show that for any integer $i \in [0,2r]$,

\begin{equation}\label{need_to_Show}
\mu_i \leq \max \limits_{0 \leq j \leq r} \limsup \limits_m |\delta_j(f^m)|^{1/m}.
\end{equation}

It is clear from the definition of $\mu_i$ and $\delta_j(f^m)$ (see Equation \ref{intersection_number_cup_product}) that they specialise well,~and thus it suffices to prove the bound (\ref{need_to_Show}),~when $k$ is an algebraic closure of a finite field.~Note that this is not the case with the bound in (\ref{main_inequality_needed}).~Hence we need this intermediate step.~Hence we now assume that $k$ is an algebraic closure of a finite field,~and that $X$ and the self map $f$ are defined over this finite field.

%For any integer $m \geq 1$,~let $[\Gamma_{f^m}] \in A_{\text{num}}^{r}(X \times X)$ be the cycle corresponding to the graph of $f^m$.

The work of Katz-Messing \cite[Theorem 2.1]{Katz-Messing} and the Lefschetz trace formula  imply that for every integer $i \in [0,2r]$,~there exist an algebraic cycle $\pi^{i}_X \in Z^{r}(X \times X)_{\Q}$ (the $i^{\mathrm{th}}$  `Kunneth component',~see \cite[Section 3.3.3]{Yves_Andre}) such that

\begin{equation}\label{trace_kunneth}
\text{Tr}(f^{m*};H^{i}(X,\Q_{\ell}))=(-1)^{i}[\Gamma_{f^{m}}] \cdot \pi^{2r-i}_X,~
\end{equation}

\noindent representing the trace as an intersection product (on the product variety $X \times_k X$).~Recall that we have fixed an embedding $\tau:\Q_{\ell} \hookrightarrow \C$ (see (\ref{embedding_chapter_four})).~Thus $\Q_{\ell}$ is a normed field via this embedding.~Proposition \ref{Trace_Spectral_radius} and  (\ref{trace_kunneth}) together imply that,

\begin{equation}\label{first_bound}
\mu_i = \limsup_m |[\Gamma_{f^{m}}] \cdot \pi^{2r-i}_X|^{1/m},~0 \leq i \leq 2r.
\end{equation}

There exist finitely many  subvarieties $W_{j}^{2r-i} \subseteq X \times_k X$ of codimension $r$ (the components of the `Kunneth components') and a constant $C'$ such that for every $m \geq 1$,

\begin{equation}\label{second_bound}
|[\Gamma_{f^{m}}] \cdot \pi^{2r-i}_X| \leq C'\sum_{j} |[\Gamma_{f^m}] \cdot [W_j^{2r-i}]|,~0 \leq i \leq 2r.
\end{equation}

The estimate in Proposition \ref{basic_bound_intersection_product} (applied to the smooth projective variety $X \times_k X \subseteq \P^{n^2+2n}_k$) and (\ref{second_bound}) imply that there exists a constant $C''$ (depending only on $i:X \hookrightarrow \P^n_k$ and the choice of Kunneth components) such that for every $m \geq 1$,

\begin{equation}\label{third_bound}
|[\Gamma_{f^m}] \cdot \pi^{2r-i}_X| \leq C''\text{deg}([\Gamma_{f^m}])(\sum_{j} \text{deg}(W_{j}^{2r-1})),~0 \leq i \leq 2r.
\end{equation}

The degree in (\ref{third_bound}) is with respect to the embedding $X \times_k X \subseteq \P^{n^2+2n}_k$.~Moreover Lemma \ref{class_graph} implies that 

\begin{center}

$\text{deg}(\Gamma_{f^m})=\sum_{j=0}^r \delta_{r-j}(f^m) \text{deg}([\P_k^{r-j}] \times [\P^j_k])$.

\end{center}

Hence (\ref{first_bound}) and (\ref{third_bound}) together with Lemma \ref{bound_lim_sup} imply that for any integer $i \in [0,2r]$,~

\begin{equation}\label{fourth_bound}
\mu_i \leq \max \limits_{0 \leq j \leq r} \limsup_m |\delta_{j}(f^m)|^{1/m}.
\end{equation}

Thus we have obtained the bound (\ref{need_to_Show}) over an arbitrary algebraically closed field.

\vspace{0.5cm}

For the rest of the proof we work over the algebraically closed field $k$ we started with.~Let $\text{A}^{Gr}_{\text{num}}(f,[H])_{\R}$ be the numerical Gromov algebra with $\R$-coefficients.~Let $||.||$ be any norm on the finite dimensional $\R$-vector space $\text{A}^{Gr}_{\text{num}}(f,[H])_{\R}$.~Note that $f^*$ is a graded linear transformation of $\text{A}^{Gr}_{\text{num}}(f,[H])_{\R}$.~For every integer $m \geq 1$,~we denote the norm of the linear map $f^{m*}$ acting on $\text{A}^{Gr}_{\text{num}}(f,[H])_{\R}$ by $||f^{m*}||$.

Recall that $\delta_{j}(f^m)=f^{m*}([H]^j) \cdot [H]^{r-j}$.~Since the intersection product is bilinear,~the map from the $j^{\mathrm{th}}$ graded part of $\text{A}^{Gr}_{\text{num}}(f,[H])_{\R}$ to $\R$,~obtained by taking intersection product with $[H]^{r-j}$ is linear.~Consequently there exists a constant $\tilde{C'}$ independent of $m$,~such that for any $m \geq 1$,

\begin{equation}\label{sixth_bound}
|\delta_j(f^{m})| \leq \tilde{C'}||f^{m*}(H^{j})||,~0 \leq j \leq r.
\end{equation}

Since $f^*$ is a linear map,~(\ref{sixth_bound}) implies that there exists a constant $\tilde{C}$ independent of $m$ such that,~for any $m \geq 1$,~

\begin{equation}\label{fifth_bound}
|\delta_j(f^m)|^{1/m} \leq \tilde{C}^{1/m}||f^{m*}||^{1/m},~0 \leq j \leq r.
\end{equation}

Thus Theorem \ref{Gelfand_Spectral_radius},~(\ref{fourth_bound}) and (\ref{fifth_bound}) together imply 

\begin{center}

$\mu_i \leq \lambda^{Gr},~0 \leq i \leq 2r$.

\end{center}

\end{proof}

\begin{cor}\label{correspondence_variety_Serre}
Let $X$ be a smooth, projective variety over an arbitrary algebraically closed field $k$ of dimension $r$.~Let $f \colon X \to X$ be a self map  such that $f^*(\omega)=\lambda \omega$ for some ample class $\omega \in H^{2}(X,\Q_{\ell})$ and integer $\lambda$.~Then the spectral radius of $f^*$ on $H^*(X,\Q_{\ell})$ is $\lambda^r$.
\end{cor}

When $k$ is of characteristic $0$,~Corollary \ref{correspondence_variety_Serre} is a consequence of Serre's result \cite[Th\'eor\`eme 1]{Ser60}.~Also note that the assumption of Corollary \ref{correspondence_variety_Serre} is automatic when $\text{Pic}(X)=\Z$.

\section{Gromov algebra as a Lefschetz Module}\label{LLV}

The aim of this section is to give a representation theoretic perspective to the (homological) Gromov algebra by relating it to the work of Looijenga-Lunts \cite{LL97} and Verbitsky \cite{Ver96}.~In doing so we hope that this picture will give the right generalization of Esnault-Srinivas's result (Theorem \ref{Esnault-Srinivas},~(2)) to higher dimensional varieties over $\C$.

In the language of Gromov algebra,~the proof of Esnault-Srinivas (over $\C$) uses polarized Hodge structure on $H^{2}(X,\Q)$ \cite[Proposition 5.1]{Esnault-Srinivas} to obtain restrictions on eigenvalues of an automorphism of a surfaces on the orthogonal complement (with respect to cup-product) of the Gromov algebra with respect to any ample class.~A natural question then is how do we decompose the cohomology of a higher dimensional variety into $f^*$-stable subspaces such that the Gromov algebra is a `natural' component? 

We now give a plausible answer to this question.

\subsection{The LLV Lie algebra}

Now we summarize the key construction from \cite{LL97} and \cite{Ver96}.

Let $K$ be a field of characteristic $0$.~Let $M_{\bullet}$ be a $\Z$-graded finite dimensional $K$-vector space.~Let $h \in \text{End}(M)$ be such that $h$ acts by $i$ in degree $i$.~Thus the eigenspaces of $h$ determine the grading of $M_{\bullet}$,~and $u \in \text{End}(M)$ has degree $i$ iff $[h,u]=iu$.

An endomorphism $e \in \text{End}(M)$ of degree $2$ is said to have the \textit{Lefschetz property},~if for every $i \in \Z,~e^{i} \colon M_{-i} \to M_{i}$ is an isomorphism.~Equivalently by the Jacobson-Morozov lemma this is equivalent to existence of a linear transformation $f \in \text{End}(M)$ such that $f$ is of degree $-2$ such that $[e,f]=h$.

Let $\la$ be a finite dimensional $K$-vector space.~We regard $\la$ as a graded abelian Lie algebra which is homogeneous of degree $2$.~We say that a graded Lie homomorphism $e \colon \la \to \gl(M)$ has the \textit{Lefschetz property} if for some $a \in \la$,~$e(a)$ has the Lefschetz property.~Clearly having Lefschetz property is a Zariski open condition on $\gl(M)$,~and by Jacobson-Morozov we have a rational morphism $f \colon \la \to \gl(M)$ defined on this open subset.~The Lie sub-algebra of $\gl(M)$ generated by $e(a), f(a)$ for all possible choices of $a$ (denoted by $\g(\la,M)$) is the LLV lie algebra.~Note that $\text{ad}(h)$ induces a grading on $\g(\la,M)$ and that $\g(\la,M)$ is evenly graded under this action.

\begin{defn}\label{L_Mod}[Lefschetz Modules]
We say that the pair $(\la,M)$ is a \textit{Lefschetz $\la$-Module} if $\g(\la,M)$ is semisimple.
\end{defn}

\subsubsection*{Basic properties of the Lefschetz Module}

In what follows we fix $\la$ as above.

\begin{enumerate}[(i)]

\item The collection of Lefschetz $\la$-modules is closed under direct sums,~taking tensor products and taking duals.

\item The category of Lefschetz $\la$-modules is a semi-simple,~Artinian and Noetherian category. 

\item Given a Lefschetz module $M$,~any representation of $\g(\la,M)$ is also a Lefshcetz module (under the natural $\la$-action).~Moreover this correspondence preserves irreducibility. 

\item For any Lefschetz module $M$,~$\g(\la,M)$ is naturally graded and compatible with the action on $M$.

\end{enumerate}

\subsection{Connections with the Gromov algebra}
Let $X$ be a smooth projective variety over $\C$ of dimension $r$.~Let $f \colon X \to X$ be a finite self-map.~Then $f^*$ is an isomorphism on the top Betti cohomology and hence by non-degeneracy of the cup-product pairing it is so on all of $H^*(X,\Q)$.~Let $\omega \in H^2(X,\Q)$ be an ample class.

Let $\la$ be the $f^*$ stable subspace generated by $\omega$,~considered as a graded Lie algebra in degree 2.~Note that $\la$ is by definition the degree $2$ summand of $\text{A}^{Gr}_{\text{hom}}(f,\omega)_{\Q}$.~Let $M \colonequals H^{*}(X,\Q)[r]$ be the shifted (by $r$) total Betti cohomology of $X$.~Note that there is a natural graded map from $\la \to \gl(M)$ given by cup product,~and we denote the associated Lie algebra $\g(\la,M)$ by $\g(f,\omega)$.~Following result is an immediate consequence of \cite[Proposition 1.6]{LL97}.

\begin{prop}\label{semi_simple}
$\g(f,\omega)$ is semi-simple and hence $H^*(X,\Q)[r]$ is a Lefschetz module over the degree $2$ summand of $\text{A}^{Gr}_{\text{hom}}(f,\omega)_{\Q}$.
\end{prop}

\begin{rmk}\label{base_extension}
To be precise \cite[Proposition 1.6]{LL97} would show that $\g(f,\omega)\otimes_{\Q}\C$ is semi-simple,~which then implies the same for $\g(f,\omega)$ since non-degeneracy of the Killing form can be checked after an extension of base field.
\end{rmk}

We end this brief section by showing that $\text{A}^{Gr}_{\text{hom}}(f,\omega)_{\Q}$ is an irreducible summand of $H^*(X,\Q)[r]$ under the action of $\g(f,\omega)$.

\begin{prop}\label{Gromov_Lefschetz}
The homological Gromov algebra $\text{A}^{Gr}_{\text{hom}}(f,\omega)_{\Q}[r]$ is an irreducible representation of $\g(f,\omega)$ and hence equivalently also an irreducible Lefschetz module.
\end{prop}

\begin{proof}
We will at once show that $\text{A}^{Gr}_{\text{hom}}(f,\omega)_{\Q}[r]$ is a representation of $\g(f,\omega)$ and that it is an irreducible one.~First note that the lowest graded piece of $\text{A}^{Gr}_{\text{hom}}(f,\omega)_{\Q}[r]$ lies in the kernel of $\g(f,\omega)_{<0}$,~the negatively graded part of $\g(f,\omega)$.~Thus \cite[Proposition 1.12]{LL97} implies that the $\g(f,\omega)$ stable subspace generated by the lowest graded piece of $\text{A}^{Gr}_{\text{hom}}(f,\omega)_{\Q}[r]$ is generated by the action of Lefschetz operators and hence is equal to $\text{A}^{Gr}_{\text{hom}}(f,\omega)_{\Q}[r]$.~Irreducibility is a consequence of \cite[Corollary 1.13]{LL97},~since the lowest graded piece of $\text{A}^{Gr}_{\text{hom}}(f,\omega)_{\Q}[r]$ is a one dimensional space.
\end{proof}

Proposition \ref{Gromov_Lefschetz} seems to suggest that the Gromov algebra is one piece of a natural decomposition of the cohomology into $f^*$ stable subspaces.~Thus it would be interesting to study constraints analogous to Theorem \ref{Esnault-Srinivas} and Theorem \ref{Gromov_arbitrary_base_field} on the other pieces of this decomposition.~A natural starting point would be the cohomology of compact hyper-K\"ahlerian manifolds,~where both the LLV lie algebra and questions of entropy are rather well studied (see for example \cite{GLR19},~\cite{Obe19},~\cite{Ogu07}).~We hope to come back to this question in the future.

\bibliography{your_bib_archive}

\end{document}